\newtheorem{thm}{Theorem}
\newtheorem{cor}[thm]{Corollary}
\newtheorem{lem}[thm]{Lemma}
\newtheorem{prop}[thm]{Proposition}
\newtheorem*{thm*}{Theorem}
\newtheorem*{cor*}{Corollary}
\newtheorem*{lem*}{Lemma}
\newtheorem*{prop*}{Proposition}
\newtheorem*{ex*}{Exercise} 
\theoremstyle{definition}
\newtheorem{defn}[thm]{Definition}
\newtheorem*{defn*}{Definition}
\newtheorem*{prob*}{Problem}
\theoremstyle{definition}
\theoremstyle{definition}
\newtheorem{examp}[thm]{Example}
\newtheorem*{example*}{Example}
\theoremstyle{remark}
\newtheorem*{conj*}{Conjecture}
\newcommand{\mc}[1]{\mathcal{#1}}
\newcommand{\mbf}[1]{\mathbf{#1}}
\newcommand{\Z}{{\mathbb Z}}
\newcommand{\N}{{\mathbb N}}
\newcommand{\nin}{\not\in}
\newcommand{\empset}{\{\}}
\renewcommand{\today}{\number \day \space \ifcase \month \or January\or%
  February\or March\or April\or May\or June\or July\or August\or%
  September\or October\or November\or December\fi \space \number \year}
\DeclareMathOperator{\sdepth}{sdepth}
\DeclareMathOperator{\depth}{depth}
\newcommand{\set}[1]{\ensuremath{\left\{ #1 \right\}}}
\DeclareMathAlphabet{\mathcal}{OMS}{cmsy}{m}{n}
\newcommand{\SR}{Stanley-Reisner}
\begin{document}
\title{Combinatorial Reductions for the Stanley Depth of $I$ and $S/I$}
\author{Mitchel T.\ Keller}
\address{Department of Mathematics\\Washington and Lee University\\204
W Washington Street\\Lexington, VA 24450}
\email{kellermt@wlu.edu}
\author{Stephen J.\ Young}
\address{Pacific Northwest National Laboratory\\Richland, WA 99352}
\email{stephen.young@pnnl.gov}
\thanks{The first author was partially supported in this research by a Marshall
  Sherfield Fellowship and a Robert Price
  Research Award. The second author was partially supported in this
  research by an AMS-Simons Travel Grant.}
\date{27 August 2017}
\thanks{\textit{PNNL Information Release:} PNNL-SA-121188}
\keywords{Stanley depth; monomial ideal; subset lattice; Boolean lattice}
\subjclass[2010]{06A07, 05E40, 13C13}
\maketitle
\begin{abstract}
We develop combinatorial tools to study the relationship between the
Stanley depth of a monomial ideal $I$ and the Stanley depth of its
compliment, $S/I$.  Using these results we are able to prove that if
$S$ is a polynomial ring with at most 5 indeterminates and $I$ is a
square-free monomial ideal, then the Stanley depth of $S/I$ is strictly
larger than the Stanley depth of $I$.  Using a computer search, we are
able to extend this strict inequality up to polynomial rings with at
most 7 indeterminates.  This partially answers questions asked by Propescu
and Qureshi as well as Herzog.
\end{abstract}

\section{Introduction}

In \cite{stanley:linear-dioph}, Richard P.\ Stanley introduced what
has come to be known to be the Stanley depth of a particular class of
modules. Following Herzog's survey article
\cite{herzog:sdepth-survey}, we give the following restricted
definition that suffices for our purposes.

\begin{defn}
  Let $K$ be a field and $S = K[x_1,\dots,x_n]$ the polynomial ring
  over $K$. Let $M$ be a $\Z^n$-graded $S$-module, $m\in M$
  homogenous, and $Z\subset \set{x_1,\dots,x_n}$. We call the
  $K[Z]$-submodule $mK[Z]$ of $M$ a \emph{Stanley space} of $M$ if
  $mK[Z]$ is a free $K[Z]$-submodule of $M$. In this case, we call
  $|Z|$ the \emph{dimension} of $mK[Z]$.

  A \emph{Stanley decomposition} $\mc{D}$ of $M$ is a decomposition of
  $M$ as a direct sum of $\Z^n$-graded $K$-vector spaces
  \[\mc{D}\colon M = \bigoplus_{j=1}^r m_j K[Z_j],\]
  where each $m_j K[Z_j]$ is a Stanley space of $M$. Given a Stanley
  decomposition $\mathcal{D}$, we define its \emph{Stanley depth} to
  be $\sdepth(\mc{D}) = \min\set{|Z_j|\colon j=1,\dots,r}$. The
 \emph{ Stanley depth} of $M$ is then $\sdepth(M) =
 \max_\mc{D}\sdepth(\mc{D})$, where the maximum is taken over all
 Stanley decompositions of $M$.
\end{defn}

In this work we focus on the relationship between $\sdepth{I}$ and
$\sdepth{S/I}$ for an arbitrary monomial ideal $I$.  In
\cite{rauf:depth-sdepth}, Rauf showed that if $I$ is a complete
intersection monomial ideal, then $\sdepth(I) > \sdepth(S/I)$.
Subsequently, Herzog's survey article \cite{herzog:sdepth-survey}
stated the weaker inequality $\sdepth(I)\geq \sdepth(S/I)$ (for
arbitrary monomial ideals) as Conjecture 64, noting that the
inequality is strict in all known cases. The strict version of the
inequality was conjectured by Popescu and Qureshi in
\cite{popescu:compute-sdepth}. They gave their motivation in
Proposition 5.2, which states that this inequality, when combined with
a proof of a conjecture on cyclic $S$ modules, would yield a proof of
a conjecture of Stanley's for all monomial ideals. (A more general
version of Conjecture 64 in \cite{herzog:sdepth-survey} is presented
there as Question 63, which involves the relationship between a module
and its syzygy module. That question has been answered in the negative
by Ichim et al.\ in \cite{ichim:compute-sdepth}.)

The conjecture of Stanley dates back to \cite{stanley:linear-dioph},
where he conjectured that $\sdepth(M)\geq \depth(M)$ for all
$S$-modules $M$. This conjecture has been resolved affirmatively in a
number of cases, but recently Duval et al.\ provided a counterexample
in \cite{duval:sdepth-counterex}. In particular, they disproved an
earlier conjecture given independently by Stanley
\cite{stanley:balanced-cm} and Garsia \cite{garsia:comb-methods}. That
conjecture proposed that every Cohen-Macaulay simplicial complex is
partitionable. By a result of Herzog, Jahan, and Yassemi
\cite{herzog:partition-complex}, this conjecture is equivalent to
Stanley's conjecture when $M$ is $S/I$ for the \SR{} ideal $I$ of a
Cohen-Macaulay simplicial complex.  However, the construction of Duval
et al.\ gives a quotient ideal as the counterexample to Stanley's
conjecture, and thus it remains possible that
the conjecture holds for all monomial ideals. In fact, a
recent conjecture of Katth\"an \cite{katthan:betti-posets} would imply
Stanley's conjecture for monomial ideals as well in addition to
implying that Stanley's conjecture was ``almost'' right for quotients
of monomial ideals (in that we would have $\sdepth S/I \geq \depth S/I - 1$).

Our primary tool in investigating the relationship between
$\sdepth{I}$ and $\sdepth{S/I}$ will be the recent work of Herzog,
Vladoiu, and Zheng \cite{herzog:sdepth-mon-ideal} which allows us to
focus on partitions of partially ordered sets instead of algebraic decompositions.
Specifically,
they showed that if the module $M$ is a monomial ideal $I$ of $S$ or a
quotient $I/J$ of monomial ideals of $S$, it is possible to determine
the Stanley depth of $M$ precisely by considering special partitions
of a poset naturally associated to the monomial ideal (or quotient).

Let $J\subseteq I\subseteq S$ be monomial ideals. Suppose that $I$ is
generated by the monomials
$x^{\mbf{a}_1},x^{\mbf{a}_2},\dots,x^{\mbf{a}_r}$ and $J$ is generated
by $x^{\mbf{b}_1},x^{\mbf{b}_2},\dots,x^{\mbf{b}_s}$. Here the
$\mbf{a}_i$ and $\mbf{b}_j$ are ordered $n$-tuples of nonnegative
integers and $x^{\mbf{c}} = x_1^{\mbf{c}(1)}x_2^{\mbf{c}(2)}\cdots
x_n^{\mbf{c}(n)}$ with $\mbf{c}(i)$ the $i$\textsuperscript{th} entry
of $\mbf{c}$. Notice that the usual componentwise partial order on
$\N^n$ is algebraically meaningful here, as $\mbf{a}\leq\mbf{b}$ if
and only if $x^\mbf{a}$ divides $x^\mbf{b}$. To define a finite
subposet of $\N^n$ associated with $I/J$, we find a $\mbf{g}\in \N^n$
such that $\mbf{a}_i\leq \mbf{g}$ for all $i$ and $\mbf{b}_j\leq
\mbf{g}$ for all $j$. (Such a $\mbf{g}$ clearly exists, as the
componentwise maximum of the $\mbf{a}_i$ and $\mbf{b}_j$ satisfies the
inequalities.) Define a subposet $P_{I/J}^\mbf{g}$ of $\N^n$
associated to $I/J$ by taking as the ground set of $P_{I/J}$ all
$\mbf{c}\in\N^n$ such that
\begin{enumerate}
\item $\mbf{c}\leq \mbf{g}$,
\item $\mbf{c}\geq \mbf{a}_i$ for some $i$, and
\item $\mbf{c}\not\geq \mbf{b}_j$ for all $j$.
\end{enumerate}
We generally will take $\mbf{g}$ to be as described above and then
write $P_{I/J}$ for $P^\mbf{g}_{I/J}$.

The idea of Herzog et al.\ in \cite{herzog:sdepth-mon-ideal}  was to
partition the poset $P_{I/J}$ into intervals $[\mbf{a},\mbf{b}] =
\set{\mbf{c}\in\N^n\colon \mbf{a}\leq \mbf{c}\leq\mbf{b}}$ and
make the following definitions.

\begin{defn}
  Let $\mc{P}$ be a partition of $P^\mbf{g}_{I/J}$ into intervals. We
  call such a partition a \emph{Stanley partition}. For
  $\mbf{c}\in P^\mbf{g}_{I/J}$, the number of coordinates in which
  $\mbf{c}$ is equal to $\mbf{g}$ is denoted by $\alpha(\mbf{c}) = |\set{i\in
    [n]\colon \mbf{c}(i) = \mbf{g}(i)}|$. The
  \emph{Stanley depth} of $\mc{P}$ is $\sdepth(\mc{P}) =
  \min_{[\mbf{a},\mbf{b}]\in\mc{P}} \alpha(\mbf{b})$, and the \emph{Stanley
    depth} of $P^\mbf{g}_{I/J}$ is
  $\sdepth(P^\mbf{g}_{I/J})=\max_{\mc{P}} \sdepth(\mc{P})$, where the
  maximum is taken over all partitions of $P^\mbf{g}_{S/I}$ into
  intervals. If $\sdepth(\mc{P}) = \sdepth(P_{I/J}^\mbf{g})$, we say
  that $\mc{P}$ is \emph{optimal}.
\end{defn}

Herzog et al.\ showed in \cite{herzog:sdepth-mon-ideal} that $\sdepth(I/J)
= \sdepth(P^\mbf{g}_{I/J})$, providing a mechanism (albeit not
terribly efficient) for computing the Stanley depth of a monomial
ideal (or quotient of monomial ideals). This result generated a flurry
of new activity, including the calculation of the Stanley depth of the
maximal ideal of $K[x_1,\dots,x_n]$ 
\cite{k:interval-partition-sdepth}, results on the Stanley depth of
squarefree Veronese ideals of $K[x_1,\dots,x_n]$ for some degrees 
\cite{ge:sdepth-sqfree-veronese,k:sdepth-sqfree-veronese}, results on
the Stanley depth of complete intersection monomial ideals
\cite{k:sdepth-squarefree,okazaki:sdepth,shen:sdepth-monomial-ideal}, and CoCoA
implementations of the algorithm
\cite{ichim:hdepth-algorithm,rinaldo:sdepth-algorithm}.

In this paper, we investigate the conjectured inequality using the
poset partition approach. We prove the conjecture (with strict
inequality) for $n=3$. We then restrict our arguments to the case
where $I$ is squarefree, since in this context the posets $P_I$ and
$P_{S/I}$ can be viewed as complementary subposets of the subset
lattice $\mbf{2}^n$. In addition to being easier to read, arguments
and results focusing on squarefree monomial ideals were given
additional emphasis when Ichim et al.\ showed in
\cite{ichim:sdepth-polarization} that
\[\sdepth I/J-\depth I/J=\sdepth I^p/J^p-\depth I^p/J^p,\]
where $J\subseteq I\subseteq S$ are ideals and $I^p$ denotes
polarization. 

To facilitate our arguments for the squarefree cases, Sections~\ref{sec:lemmas},
\ref{sec:comb-crit}, and \ref{sec:splits} establish a
number of ancillary results focused mainly on the
structure of a minimal counterexample. For $n=4$ and $n=5$, we prove
the strict conjecture in the case where $I$ is squarefree. We also
describe computational methods that have verified that if $I$ is
squarefree, the strict conjecture also holds for $n=6$ and
$n=7$. Therefore, the principal result of this paper is:

\begin{thm*}
  Let $K$ be a field and $n\leq 7$ an integer. If $I$ is a squarefree
  monomial ideal of $S=K[x_1,\dots,x_n]$, then $\sdepth I>\sdepth S/I$.
\end{thm*}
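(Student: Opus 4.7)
The plan is to reformulate the inequality combinatorially via the Herzog--Vladoiu--Zheng correspondence, reduce to a minimal counterexample using the structural results of Sections~\ref{sec:lemmas}, \ref{sec:comb-crit}, and~\ref{sec:splits}, and then finish with hand analysis for $n\leq 5$ and computer enumeration for $n\in\set{6,7}$.

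First, I would set $\mbf{g}=(1,\dots,1)$ so that $P_I$ becomes a subposet of the Boolean lattice $\mbf{2}^{[n]}$, with $P_{S/I}$ its complement, and $\alpha(\mbf{c})=|\set{i\colon \mbf{c}(i)=1}|$ coincides with subset cardinality. A Stanley partition then becomes a partition of $P_I$ (respectively $P_{S/I}$) into Boolean intervals $[A,B]$, with depth $\min |B|$. Under this translation the theorem reduces to: for every interval partition of $P_{S/I}$ achieving depth $d$, there exists an interval partition of $P_I$ with depth strictly greater than $d$.

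Next, I would argue by contradiction. Assume a squarefree monomial ideal $I$ in $n\leq 7$ variables violates the strict inequality, and choose one minimizing $n$ and then the number of generators. The ancillary results should do most of the work: lemmas from Sections~\ref{sec:lemmas} and~\ref{sec:splits} about how Stanley depth interacts with removing a variable or splitting $P_I$ along a coordinate should force the minimal counterexample to have no variable that can be factored out, no generator equal to a single $x_i$, and no non-trivial coordinate split; the combinatorial criteria of Section~\ref{sec:comb-crit} should pin down the exact cardinalities and tops of the intervals that any optimal partition may use. Any optimal partition of $P_I$ in the minimal counterexample should then be "tight" in the sense that every interval top $B$ satisfies $|B|=\sdepth I$ exactly, and similarly for $P_{S/I}$.

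For $n=4$ and $n=5$, these restrictions whittle the minimal counterexample down to a short list of posets, each of which can be treated by a direct rank-by-rank counting argument: one compares the number of elements at each rank of $P_I$ and $P_{S/I}$ to the number of feasible interval tops of that rank, and the complementarity of the two subposets in $\mbf{2}^{[n]}$ forces a contradiction once $\sdepth I \leq \sdepth S/I$ is assumed. For $n=6$ and $n=7$, the same reductions apply but the number of surviving configurations is too large to enumerate by hand, so one iterates over all squarefree ideals (up to the natural $S_n$-action) that survive the reductions and computes $\sdepth I$ and $\sdepth S/I$ for each using the Herzog--Vladoiu--Zheng algorithm or an optimized variant, verifying the strict inequality in every case. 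The principal obstacle is the case $n=5$: after the structural reductions there remain genuine configurations in which the ranks available for interval tops of optimal partitions of $P_I$ and $P_{S/I}$ overlap across the middle levels of $\mbf{2}^{[5]}$, and tracking how each interval chosen on one side constrains the tops still available on the other requires careful bookkeeping. For $n\in\set{6,7}$, the difficulty shifts from combinatorial subtlety to computational infrastructure: ensuring the enumeration is exhaustive modulo symmetry and that the depth computations are verifiably correct.
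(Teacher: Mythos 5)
Your overall architecture matches the paper's: translate to interval partitions of complementary subposets of $\mbf{2}^{[n]}$ via Herzog--Vladoiu--Zheng, pass to a minimal counterexample and apply the reductions of Theorem~\ref{thm:min-counterex}, Lemma~\ref{lem:cover-k-1}, Corollary~\ref{cor:n-1} and the splitting Lemma~\ref{lem:splits}, then finish $n=6,7$ by exhaustive computation on the surviving configurations. The gap is in the step that actually carries the weight, namely $n=5$ with facets of size $k=3$ (the only case for $n\leq 5$ not already disposed of by the reductions; note that $n=4$ needs no counting at all). There you assert that a ``rank-by-rank counting argument,'' comparing the number of elements of each size in $P_I$ and $P_{S/I}$ with the number of feasible interval tops, forces a contradiction. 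Counting sets of each size is exactly the combinatorial criterion of Section~\ref{sec:comb-crit}, and such criteria are provably insufficient: the down set of $\set{123,124,125,134,345,234}$ in $\mbf{2}^5$ satisfies the criterion yet has Stanley depth less than $3$, and the Duval et al.\ complex $C_3$ satisfies even the strong criterion while its quotient has deficient Stanley depth. Existence of a partition of $P_I$ with every top of size at least $k+1$ depends on the incidence structure --- which uncovered $2$-sets and $3$-sets can be placed into pairwise disjoint intervals with large tops --- not merely on the $f$-vectors of the two complementary subposets, so ``complementarity plus counting'' does not yield the contradiction you claim.

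What is actually needed (and what the paper does) for $n=5$, $k=3$ is a structural argument: after the reductions, show the facet hypergraph has at least four edges, use degree-sequence analysis together with the strong combinatorial criterion to force a four-edge subhypergraph with degree sequence $(3,3,3,2,1)$, normalize it to $\set{123,124,145,234}$, and then exhibit an explicit partition of $P_I$ (using intervals such as $[25,1245]$ and $[35,1345]$, with the remaining small sets matched into $4$-sets) witnessing $\sdepth I\geq 4$. Your proposal would have to supply either this analysis or an exhaustive finite check with explicit partitions for $n=5$, which you only invoke for $n\in\set{6,7}$. Two smaller points: your claim that in a minimal counterexample every interval top of an optimal partition has size exactly $\sdepth I$ is neither a consequence of the reductions nor needed; and your computational plan for $n=6,7$ (enumeration up to symmetry, filtering by the degree conditions of Theorem~\ref{thm:min-counterex}, the strong combinatorial criterion, and splitting, then direct verification that $\sdepth P_I\geq k+1$) is essentially the paper's procedure and is fine as described.
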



\subsection*{Notation and Terminology} If $P$ is a poset, we call the
set \[D(A) = \set{x\in P\mid x< a\text{ for some }a\in A}\]
the \emph{down set} of $A$ in $P$. The \emph{closed down set} of $A$
is $D[A] = D(A)\cup A$. Dually, the \emph{up set} of $A$ in $P$ is
$U(A) = \set{x\in P\mid x > a\text{ for some }a\in A}$ and
$U[A] = U(A)\cup A$ is the \emph{closed up set} of $A$ in $P$. If $S$
is a set, we denote by $\mbf{2}^S$ the lattice of all subsets of
$S$. In the case of $[n] = \set{1,2,\dots,n}$, we will often write
$\mbf{2}^n$ for $\mbf{2}^{[n]}$. 

If $A$ is an antichain in $\mbf{2}^{n}$, then $D[A]$ can be viewed a
simplicial complex $\Delta$ with $A$ as its set of facets. If
$I_\Delta$ is the \SR{} ideal of a simplicial complex
$\Delta$, then $P_{S/I_\Delta}$ is the same as $\Delta$. Similarly,
for a squarefree monomial ideal $I$, $P_{S/I}$ is the \SR{}
complex of $I$, and the antichain of maximal elements of $P_{S/I}$ is
the set of facets of the simplicial complex. If all the facets of a
simplicial complex have the same dimension, then the complex is said
to be \emph{pure}. The dimension of a simplicial complex is one less
than the maximum size of a facet. Because the approach
taken here focuses on the poset perspective, we will prefer that
terminology and viewpoint for much of this paper, although we will
freely pass between the different terminologies.

\section{The $n=3$ case}

Before developing our collection of results that provide additional
power in the squarefree case, we address the general ideal case for
$n=3$.

\begin{thm}\label{thm:n-3}
  If $n=3$ and $I\subseteq S$ is a monomial ideal, then 
  $\sdepth I > \sdepth S/I$.
\end{thm}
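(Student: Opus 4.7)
The plan is to argue by cases on $\sdepth(S/I)$. Let $\mbf{g}$ denote the coordinatewise maximum of the minimal generators of $I$; by factoring out any variable absent from $I$ we may assume $g_i \geq 1$ for all $i$, and the cases $I = 0$ and $I = S$ are vacuous. The starting observation is that $\sdepth(S/I) \leq 2$, since the only element of $[\mbf{0},\mbf{g}]$ with $\alpha = 3$ is $\mbf{g}$ itself, and $\mbf{g}$ lies above every generator of $I$, so $\mbf{g} \notin P_{S/I}$.

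The key lemma I would prove is that if $I$ has two or more minimal generators, then $\sdepth(S/I) \leq 1$. Consider $\mbf{m} = (m_1,m_2,m_3)$ with $m_i = \min_j \mbf{b}_j(i)$. Incomparability of the minimal generators forces $\mbf{m}$ to be strictly below every $\mbf{b}_j$, so $\mbf{m} \in P_{S/I}$. In any Stanley partition of $P_{S/I}$, $\mbf{m}$ sits in some interval $[\mbf{a},\mbf{b}]$; if $\alpha(\mbf{b}) = 2$, exactly one coordinate $i$ of $\mbf{b}$ satisfies $\mbf{b}(i) < g_i$, and the requirement $\mbf{b} \in P_{S/I}$ forces $\mbf{b}(i) < m_i$, while $\mbf{m} \leq \mbf{b}$ forces $m_i \leq \mbf{b}(i)$, a contradiction. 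Hence $\alpha(\mbf{b}) \leq 1$.

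Combining these observations: if $I$ is principal, then $P_I = [\mbf{a},\mbf{g}]$ is a single interval with top $\mbf{g}$, giving $\sdepth(I) = 3 > \sdepth(S/I)$. Otherwise the lemma yields $\sdepth(S/I) \leq 1$, and it suffices to produce a Stanley partition of $P_I$ with every interval top having $\alpha \geq 2$ (when $\sdepth(S/I) = 0$ the weaker bound $\sdepth(I) \geq 1$, valid for any nonzero monomial ideal, already suffices).

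The final construction is where the main obstacle lies. The candidate tops for a partition of $P_I$ with $\alpha \geq 2$ everywhere are $\mbf{g}$ together with the three ``corner chains'' $C_i = \{\mbf{c} \in P_I : \mbf{c}(j) = g_j \text{ for all } j \neq i\}$. My plan is to first absorb a large interval $[\mbf{a},\mbf{g}]$ with $\mbf{a}$ chosen near $\mbf{g}$, and then cover the residual elements --- which are confined to the three faces of $[\mbf{0},\mbf{g}]$ opposite $\mbf{g}$ --- using intervals ending in the $C_i$. Verifying that this procedure never leaves an element of $P_I$ with no reachable valid top is the delicate part; I expect this to require a case split on the structure of the minimal generators of $I$, particularly on which coordinates of the $\mbf{b}_j$ attain $g_i$ and on the comparability patterns among them.
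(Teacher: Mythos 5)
Your first half is correct, and it is a genuinely different (and slicker) way to bound the quotient side than the paper uses: taking $\mbf{m}$ to be the coordinatewise minimum of the minimal generators, noting $\mbf{m}\in P_{S/I}$ when $I$ is non-principal, and observing that the top $\mbf{b}$ of the interval containing $\mbf{m}$ cannot have $\alpha(\mbf{b})=2$ (since agreement with $\mbf{g}$ in two coordinates forces $\mbf{b}(i)<\min_j\mbf{b}_j(i)\leq\mbf{m}(i)\leq\mbf{b}(i)$) does give $\sdepth S/I\leq 1$ for non-principal $I$. The principal case and the remark that $\sdepth I\geq 1$ always are also fine. However, there is a genuine gap: the remaining case, $\sdepth S/I=1\Rightarrow\sdepth I\geq 2$, is the entire substance of the theorem, and you have only sketched a plan for it (``absorb $[\mbf{a},\mbf{g}]$, cover the residue by intervals ending in the corner chains $C_i$'') while explicitly deferring the verification. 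This is precisely where the paper spends almost all of its effort, via induction on the number of maximal elements $M$ of $P_{S/I}$ and the careful choice of an interval $J=[\mbf{s},\mbf{t}]$ with $\alpha(\mbf{t})=2$ whose removal shrinks $M$; nothing in your sketch substitutes for that argument.

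Moreover, the unconditional version of what your sketch aims at --- that every non-principal monomial ideal in three variables admits a partition of $P_I$ with all interval tops having $\alpha\geq 2$ --- is false, so the construction cannot be a structure-free bookkeeping argument: it must genuinely use information coming from $\sdepth S/I=1$ (equivalently, from the maximal elements of $P_{S/I}$), which your plan never invokes. For a concrete obstruction, take $I=\mfr{m}^5=(x_1,x_2,x_3)^5$, so $\mbf{g}=(5,5,5)$. Any two distinct minimal generators have coordinatewise meet of total degree less than $5$, hence outside $P_I$, so no interval of a Stanley partition of $P_I$ can contain two of them; thus the $\binom{7}{2}=21$ minimal generators require $21$ pairwise distinct interval tops. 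But $[\mbf{0},\mbf{g}]$ contains only $3\cdot 6-2=16$ points with $\alpha\geq 2$, so $\sdepth \mfr{m}^5=1$. (Here $\sdepth S/\mfr{m}^5=0$, so this is consistent with the theorem, but it shows your final step fails as stated for general non-principal $I$ and that the hypothesis $\sdepth S/I=1$ must enter the construction in an essential way.) Until you show how the structure of the maximal elements of $P_{S/I}$ drives the choice of intervals --- as the paper's inductive peeling argument does --- the proof is incomplete.
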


\begin{proof}
  Let $I=(x^{\mbf{a}_1},\dots,x^{\mbf{a}_t})$ and fix $\mbf{g}\in\N^n$
  such that $\mbf{g}\geq \mbf{a}_i$ for all $i$. Then $P_I$ and
  $P_{S/I}$ are disjoint and their union is the subposet of $\N^n$
  containing all elements less than or equal to $\mbf{g}$. Let $M$ be
  the set of maximal elements of $P_{S/I}$. For
  $\mbf{c}\in P_I\cup P_{S/I} = D[\mbf{g}]$, let
  \[\alpha(\mbf{c}) = |\set{i\in [n] \mid \mbf{c}(i) = \mbf{g}(i)}|.\]
  If $C\subseteq P_I\cup P_{S/I}$, let
  $\alpha(C) = \min_{\mbf{c}\in C}\alpha(\mbf{c})$.

  Since $\sdepth(P_{S/I})\leq \alpha(M)$, it suffices to show that
  $\sdepth(P_I)> \alpha(M)$. Our proof will be by induction on
  $m=|M|$. The base case is $m=1$.  We take the single element of $M$
  to be an ordered triple $\mbf{b}$ and consider cases determined by
  $\alpha(\mbf{b})$.
  \begin{enumerate}
    \setcounter{enumi}{-1}
  \item Suppose $\alpha(\mbf{b}) =0$. Partition $P_I$ into a collection of
    one-dimensional intervals $[(i,j,k),(i,j,\mbf{g}(3))]$ for $0\leq i\leq
    \mbf{g}(1)$, $0\leq j\leq \mbf{g}(2)$, and $k$ as small as possible so that
    $(i,j,k)\in P_I$. Each interval has at least one coordinate of its
    upper bound in agreement with $\mbf{g}$, so $\sdepth P_I\geq 1$.
  \item Suppose $\alpha(\mbf{b}) = 1$. Without loss of generality,
    assume that $\mbf{b}(1)=\mbf{g}(1)$ and
    $\mbf{b}(2)\geq \mbf{b}(3)$. We partition $P_I$ into two
    intervals. The first is \[I_1=[(0,\mbf{b}(2)+1,0),(\mbf{g}(1),\mbf{g}(2),\mbf{b}(3))],\]
    which lies completely inside $P_I$ because every element has
    second coordinate too large to belong to $P_{S/I}$. The second is
    $I_2=[(0,0,\mbf{b}(3)+1),\mbf{g}]$. Again, $I_2$ lies inside $P_I$ as the
    third coordinate of every point is too large to belong to
    $P_{S/I}$. Furthermore, $I_1\cap I_2 = \empset$ because
    $\mbf{c}_2(3)>\mbf{c}_1(3)$ for every $\mbf{c}_1\in
    I_1,\mbf{c}_2\in I_2$. To show that $I_1\cup I_2 = P_I$, note that
    we must ensure that any $\mbf{c}$ with $\mbf{c}(i) > \mbf{b}(i)$
    for some $i$ is in one of the intervals. If $\mbf{c}(3) >
    \mbf{b}(3)$, then $\mbf{c}\in I_2$. If $\mbf{c}(2) > \mbf{b}(2)$
    and $\mbf{c}(3)\leq \mbf{b}(3)$,
    then $\mbf{c}\in I_1$. Since $\mbf{b}(1) = \mbf{g}(1)$, there are
    no elements of $P_i$ with first coordinate greater than
    $\mbf{b}(1)$. Thus
    $\sdepth P_I\geq 2$.
  \item If $\alpha(\mbf{b}) = 2$, then we may simply partition $P_I$ into a
    single interval of the form $[(0,0,\mbf{b}(3)+1),\mbf{g}]$, assuming without
    loss of generality that $\mbf{b}(1)=\mbf{g}(1)$ and $\mbf{b}(2)=\mbf{g}(2)$.
  \item The case $\alpha(\mbf{b})=3$ is absurd, as it would imply that $P_I$
    is empty.
  \end{enumerate}

  Before proceeding to the inductive step, we must eliminate the case
  where $\alpha(M) = 2$. Note that in this case, $|M|\leq 3$ and for
  each $i=1,2,3$, there is at most one element $\mbf{z}_i$ of $M$ that
  disagrees with $\mbf{g}$ in coordinate $i$. We can partition $P_I$
  into the single interval $[\mbf{c},\mbf{g}]$ with (for $i=1,2,3$)
  $\mbf{c}(i) = \mbf{z}_i(i)+1$ if $\mbf{z}_i$ exists and
  $\mbf{c}(i)=0$ otherwise.

  Now suppose that for some $m\geq 1$, if $|M|\leq m$, then
  $\sdepth P_I > \alpha(M)$. We consider the situation where $M$ has
  $m+1$ elements and $\alpha(M) < 2$. Suppose there is $\mbf{b}\in M$
  with $\alpha(\mbf{b})=2$.  Without loss of generality,
  $\mbf{b}=(\mbf{g}(1),\mbf{g}(2),\mbf{b}(3))$. The interval
  $[(0,0,0),\mbf{b}]$ may be used in the partition of $P_{S/I}$, so
  delete it from $P_{S/I}$ and reduce the third coordinate of every
  remaining point by $\mbf{b}(3)+1$ to leave a poset $Q$. Here we
  may consider the set $M'$ consisting of the elements of $M-\mbf{b}$
  shifted by reducing their third coordinate by $\mbf{b}(3) +1$. Now
  we apply induction to determine that
  $\sdepth P_I > \alpha(M')=\alpha(M)$ as required.

  If $\alpha(M)=0$, partition $P_I$ using intervals
  $[(i,j,k),(i,j,g(3))]$ for all $i,j$, choosing $k$ as small as
  possible so that $(i,j,k)\in P_I$. This partition is guaranteed to
  work since $P_I$ is an up set. Hence $\sdepth P_I \geq 1>\alpha(M)$.

  We may now assume that $\alpha(M)= 1$. We have already taken care of
  the case where there is $\mbf{b}\in M$ with $\alpha(\mbf{b}) = 2$,
  so $\alpha(\mbf{b}) = 1$ for all $\mbf{b}\in M$. For $\mbf{b}\in M$,
  define
  $\gamma(\mbf{b})=\max_{i\colon \mbf{b}(i)\neq \mbf{g}(i)}
  \mbf{b}(i)$.
  Now choose $\mbf{b}_0$ so that $\gamma(\mbf{b}_0)$ is
  maximum. Without loss of generality, $\mbf{b}_0(1) = \mbf{g}(1)$ and
  $\mbf{b}_0(2)=\gamma(\mbf{b}_0)$.

  We wish to find an interval that can be used in a partition of $P_I$
  that will allow us to decrease the size of $M$.  To do this,
  consider the interval $J=[\mbf{s},\mbf{t}]$ where $\mbf{s}=(c_1,c_2,0)$ and
  $\mbf{t}=(\mbf{g}(1),\mbf{g}(2),c_3)$ with
  \[c_1 = 1+\max_{\substack{\mbf{b}\in M\colon\\ \mbf{b}(1)\neq
      \mbf{g}(1)\\\mbf{b}(2)=\mbf{g}(2)}} \mbf{b}(1), \quad c_2 =
  \mbf{b}_0(2)+1,\quad\text{and }\quad c_3 = \min_{\mbf{b}\in M}
  \mbf{b}(3).\]
  If there is no $\mbf{b}\in M$ with $\mbf{b}(1)\neq \mbf{g}(1)$ and
  $\mbf{b}(2)=\mbf{g}(2)$, then let $c_1=0$. We must argue that
  $\mbf{s}\nin P_{S/I} = D[M]$. Our choice of $\mbf{s}(2)$ ensures
  that $\mbf{s}(2)$ is not less than any $\mbf{m}\in M$ unless
  $\mbf{m}(2)=\mbf{g}(2)$. However, for such an $\mbf{m}$,
  $\mbf{s}(1)$ is too large to lie in $D[M]$ unless there is
  $\mbf{m}\in M$ with $\alpha(\mbf{m})=2$, a case we have already
  eliminated.

  In order to be able to apply the induction hypothesis, we must show
  that $D[\mbf{t}] - J\subseteq D[M]$. To do this, take
  $\mbf{x}\in D[\mbf{t}] - J$. Since $\mbf{x}\nin U[\mbf{s}]$, we must
  have $\mbf{x}(1) < c_1$ or $\mbf{x}(2) < c_2$. If
  $\mbf{x}(2) < c_2$, then $\mbf{x}\leq \mbf{b}_0$ since
  $\mbf{b}_0(1) = \mbf{g}(1)\geq \mbf{x}(1)$ and $\mbf{x}(3)\leq
  c_3\leq \mbf{b}_0(3)$. Thus $\mbf{x}\in D[M]$.
  Suppose that $\mbf{x}(2)\geq c_2$. Now we must have
  $\mbf{x}(1) < c_1$.  Since $\mbf{x}(1)\geq 0$, this requires that
  $c_1\neq 0$, and thus there is $\mbf{b}\in M$ with
  $\mbf{b}(1)\neq \mbf{g}(1)$ and $\mbf{b}(2)=\mbf{g}(2)$. Take such a
  $\mbf{b}$ with $\mbf{b}(1)=c_1-1$. This $\mbf{b}$ is greater than or
  equal to $\mbf{x}$, implying $\mbf{x}\in D[M]$.

  For an illustration of the situation at this point, see
  Figure~\ref{fig:split-cube}. Notice now that any $\mbf{b}\in M$ with $\mbf{b}(3) = c_3$ (and
  there is at least one such $\mbf{b}$) must satisfy
  $\mbf{b}\in D[\mbf{t}]$. Thus,
  $D[\mbf{g}] - D[\mbf{t}] = [(0,0,c_3+1),\mbf{g}]$ contains at most
  $m$ elements of $M$. Call this set of maximal elements
  $M'\subseteq M$. We may therefore, by induction, find an interval
  partition $\mc{P'}_I$ of
  $P_I - D[\mbf{t}]$ with $\sdepth(P_I - D[\mbf{t}]) >
  1=\alpha(M')$. Adding $J$ to $\mc{P}'_I$ yields a partition
  $\mc{P}_I$ of $P_I$ witnessing $\sdepth P_I = 2 > 1 = \alpha(M)$
  since $\alpha(\mbf{t}) = 2$. (Note that if $|M'| = 0$, then
  $D[\mbf{g}] - D[\mbf{t}]$ becomes the only interval in the partition $\mc{P}'_I$.)
\end{proof}
  \begin{figure}
    \centering
\begin{overpic}[width=0.5\textwidth]{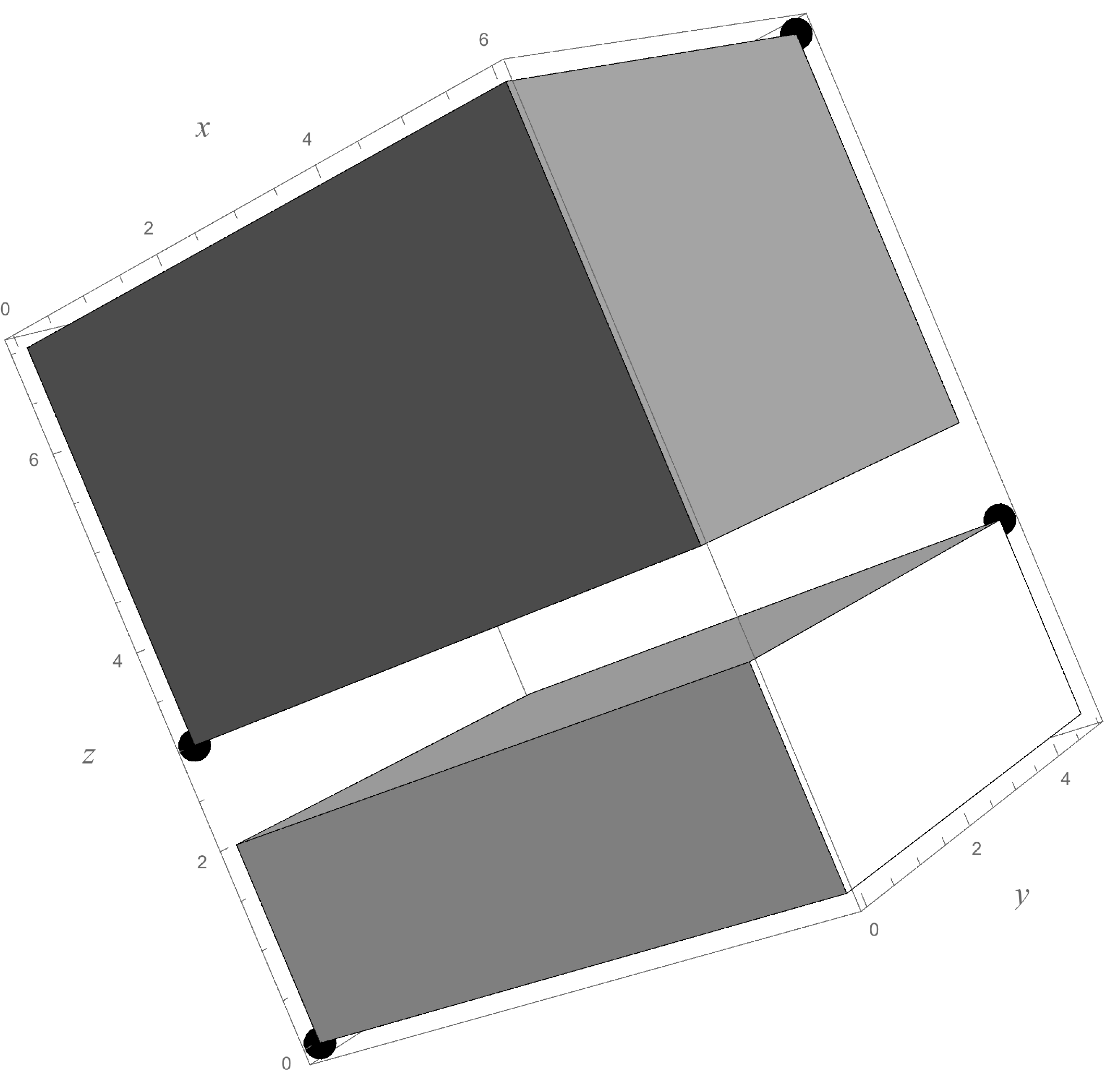}
\put(15,0){$(0,0,0)$}
\put(130,180){$\mbf{g}$}
\put(167,92){$\mbf{t}$}
\put(-20,48){$(0,0,c_3+1)$}
\end{overpic}
\caption{$D[\mbf{g}]$ and $D[\mbf{t}]$}
\label{fig:split-cube}
\end{figure}

\section{Properties of a minimal counterexample}
\label{sec:lemmas}

For the remainder of the paper, we will focus on the case where $I$ is
a squarefree monomial ideal of $S$. Combinatorially, this provides an
advantage in that a squarefree monomial $x^\mbf{a}$ has every entry in
$\mbf{a}$ equal to either $0$ or $1$. Thus, we may identify $\mbf{a}$
with $A=\set{i\in [n]\colon \mbf{a}(i) = 1}$. In doing so, $P_I$
becomes an up set in the subset lattice $\mbf{2}^n$, and $P_{S/I}$ is
the complementary down set of $P_I$ in $\mbf{2}^n$. (From an algebraic
perspective, $P_{S/I}$ can be viewed as the \SR{} simplicial
complex of $I$.) We are then able to use more straightforward language
and ideas connected to the subset lattice than the coordinate-based
argument of the previous section. We begin with an elementary property
of the subset lattice.

\begin{prop}
  There is an interval partition of $\mbf{2}^n-\empset$ in which the
  minimal element of every interval is a singleton.\label{prop:singleton-part}
\end{prop}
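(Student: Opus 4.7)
The plan is to exhibit an explicit partition. For each $i \in [n]$, define
\[I_i = [\set{i},\set{i,i+1,\ldots,n}],\]
an interval of $\mbf{2}^n$ whose minimal element is the singleton $\set{i}$, as required by the statement.

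To verify that $\set{I_1,\ldots,I_n}$ partitions $\mbf{2}^n - \empset$, I would associate each nonempty $S\subseteq [n]$ with $i = \min S$. Since $i\in S$ and every element of $S$ is at least $i$, we have $\set{i}\subseteq S\subseteq \set{i,i+1,\ldots,n}$, so $S\in I_i$. Conversely, any $T\in I_j$ contains $j$ but none of $1,\ldots,j-1$, so $j=\min T$; this makes the assignment well-defined and shows the intervals $I_1,\ldots,I_n$ are pairwise disjoint.

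There is essentially no obstacle here: the construction is direct and the verification is immediate. An alternative, slightly more flexible proof proceeds by induction on $n$, using the single interval $[\set{n},[n]]$ to cover all $2^{n-1}$ subsets containing $n$ (with singleton minimum $\set{n}$) and applying the inductive hypothesis to the copy of $\mbf{2}^{[n-1]}-\empset$ consisting of the remaining nonempty subsets. Either approach is short enough that no serious step is expected to cause trouble.
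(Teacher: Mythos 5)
Your proof is correct, and it is essentially the same as the paper's: your explicit partition into intervals $[\set{i},\set{i,\dots,n}]$ indexed by the minimum element is just the unrolled form of the paper's induction, and your ``alternative'' inductive argument (peel off the interval of all sets containing the top element, then recurse) is precisely the proof given in the paper.
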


\begin{proof}
  Our proof is by induction on $n$. The case $n=1$ is trivial. To
  partition $\mbf{2}^{n+1}-\empset$, start with the interval
  consisting of all subsets of $[n+1]$ containing $n+1$. This is an
  interval with a singleton as its minimal element. Removing it from
  $\mbf{2}^{n+1}-\empset$ leaves $\mbf{2}^n-\empset$, which we can
  partition into intervals with singletons as minimal elements by
  induction.
\end{proof}

\begin{thm}\label{thm:min-counterex}
  Let $K$ be a field and let $S = K[x_1,\dots,x_n]$. If there exists a
  squarefree monomial ideal $I\subseteq S$ such that $\sdepth I \leq
  \sdepth S/I$, then there exists an integer $n'\leq n$, a subset of indeterminates
  $\set{y_1,\dots,y_{n'}}\subseteq \set{x_1,\dots,x_n}$, and a
    squarefree monomial ideal $I'\subseteq K[y_1,\dots,y_{n'}]$ having
    \SR{} simplicial complex $\Delta$ satisfying
  \begin{enumerate}
  \item $\Delta$ is pure with
    $\dim(\Delta)+1= \sdepth S/I'\geq \sdepth I'$,
  \item the union of the facets of $\Delta$ is $[n']$, and
  \item the intersection of the facets of $\Delta$ is empty.
  \end{enumerate}

\end{thm}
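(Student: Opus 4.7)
The plan is to choose $I'$ as a minimal counterexample under the lex order on $(m,|\Delta_J|)$, where $(m,J)$ ranges over all counterexamples with $J$ a squarefree monomial ideal in $K[y_1,\dots,y_m]$ and $m\le n$. Rename a minimiser $(n',I')$ with complex $\Delta$ and ring $S'=K[y_1,\dots,y_{n'}]$. I argue that properties (2) and (3) are forced by minimality in the $m$-coordinate, and property (1) by minimality in the $|\Delta|$-coordinate.

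Properties (2) and (3) follow from two variable-deletion moves, each yielding a smaller counterexample and contradicting minimality. If some $y_j$ lies in no facet of $\Delta$, then $\set{y_j}$ is a minimal non-face, so $y_j$ is a minimal generator of $I'$; writing $I'=(y_j)+J\cdot S'$ for a squarefree ideal $J$ in the smaller ring $S'':=K[y_1,\dots,\widehat{y_j},\dots,y_{n'}]$, the poset $P_{I'}$ splits as the disjoint union $[\set{j},[n']]\sqcup P_J$, giving $\sdepth I'=\sdepth J$ and $\sdepth S'/I'=\sdepth S''/J$. Dually, if some $y_j$ lies in every facet, $y_j$ appears in no minimal generator, so $I'=J\cdot S'$ with $J\subseteq S''$, and the standard tensoring-with-$K[y_j]$ formula yields $\sdepth I'=\sdepth J+1$ and $\sdepth S'/I'=\sdepth S''/J+1$.

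For property (1), set $c:=\sdepth S'/I'$. Each facet $F$ of $\Delta=P_{S'/I'}$ is maximal, hence the top of some interval in any Stanley partition with $\alpha(F)=|F|$, so $c\le|F|$ for every facet. Suppose some facet $F^*$ satisfies $|F^*|>c$ and set $\Delta'':=\Delta\setminus\set{F^*}$ with \SR{} ideal $I''=I'+(y^{F^*})$. I will show $\sdepth S'/I''\ge c\ge\sdepth I''$, so $I''$ is a counterexample with $|\Delta''|=|\Delta|-1$; after possibly applying further (2)/(3)-reductions if $\Delta''$ violates one, the lex pair would strictly decrease, contradicting minimality of $(n',|\Delta|)$.

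For $\sdepth S'/I''\ge c$, take an optimal Stanley partition $\mc{P}$ of $\Delta$ and replace its unique interval $[A,F^*]$ ending at $F^*$ by the ``smallest missing index'' partition of $[A,F^*]\setminus\set{F^*}$: the intervals $[A\cup\set{i_1,\dots,i_{\ell-1}},F^*\setminus\set{i_\ell}]$ for $\ell=1,\dots,|F^*\setminus A|$ where $i_1<\dots<i_k$ enumerates $F^*\setminus A$; each new top has size $|F^*|-1\ge c$. The main obstacle is the reverse bound $\sdepth I''\le c$, since enlarging the ideal from $I'$ to $I''$ could a priori raise its Stanley depth. Noting $P_{I''}=P_{I'}\cup\set{F^*}$, the key rigidity is that in any partition $\mc{P}''$ of $P_{I''}$ the interval containing $F^*$ must have $F^*$ as its bottom --- of the form $[F^*,B]$ with $B\supseteq F^*$ --- because every proper subset of $F^*$ lies in $\Delta$ and is therefore absent from $P_{I''}$. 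Repartitioning $[F^*,B]\setminus\set{F^*}$ by the dual ``smallest added index'' scheme produces intervals whose tops all have size at least $|F^*|+1$, converting $\mc{P}''$ into a genuine Stanley partition $\mc{P}'$ of $P_{I'}$ whose Stanley depth is $\min(s,|F^*|+1)$, where $s$ is the minimum $\alpha$ among the intervals of $\mc{P}''$ not containing $F^*$. Since $\sdepth P_{I'}\le c<|F^*|+1$, necessarily $s\le c$, and then $\sdepth\mc{P}''=\min(s,|B|)\le s\le c$. Combining, no facet exceeds size $c$, so $\max|F|\le c\le\min|F|$, and $\Delta$ is pure of dimension $c-1$ with $\dim\Delta+1=c=\sdepth S'/I'\ge\sdepth I'$.
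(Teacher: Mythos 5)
Your overall strategy is the same as the paper's: take a minimal counterexample, obtain purity by deleting one over-large facet at a time (your explicit ``smallest missing/added index'' partitions of $[A,F^*]\setminus\set{F^*}$ and of $[F^*,B]\setminus\set{F^*}$ play exactly the role of Proposition~\ref{prop:singleton-part} in the paper), and obtain properties (2) and (3) by deleting a variable and invoking minimality. The purity argument, including the rigidity observation that in any partition of $P_{I''}$ the interval containing $F^*$ must have $F^*$ as its bottom because every proper subset of $F^*$ lies in $\Delta$, is correct and is in substance the paper's argument, just organized by a lex-minimal pair rather than by iterating the facet-removal step.

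One claim in your (2)-reduction is false as stated: from $P_{I'}=[\set{j},[n']]\sqcup P_J$ you conclude $\sdepth I'=\sdepth J$, but the splitting only yields $\sdepth I'\geq \sdepth J$ (append the interval $[\set{j},[n']]$ to an optimal partition of $P_J$). Equality can fail because an optimal partition of $P_{I'}$ may use intervals straddling the two pieces: for $I'=(y_1,y_2,y_3)\subseteq K[y_1,y_2,y_3]$ with $j=3$ one has $\sdepth I'=2$, while $J=(y_1,y_2)\subseteq K[y_1,y_2]$ has $\sdepth J=1$. Fortunately your argument only needs the true direction: $\sdepth J\leq \sdepth I'\leq \sdepth S'/I'=\sdepth S''/J$ (the last equality is correct, since no element of $P_{S'/I'}$ contains $j$, so no interval top can either), and hence $J$ is still a counterexample in fewer variables and the reduction survives. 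With that single equality weakened to the inequality actually provided by the splitting, the proof is sound and essentially matches the paper's.
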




\begin{proof}
  Let $I\subseteq S$ be a squarefree monomial ideal such that
  $k=\sdepth(S/I)\geq \sdepth I$. We also assume that amongst all such
  ideals, we have chosen $S$ so that $n$ is minimal.

  We begin by proving the first statement by showing how to construct
  such a simplicial complex starting with a squarefree monomial
  ideal for which $\sdepth I\leq \sdepth S/I$.  We assume that
  $P_{S/I}$ contains a set $X$ of size at least $k+1$, taking $|X|$ to
  be as large as possible. Let $\mc{P}_I$ be an optimal Stanley
  partition of $P_I$ and $\mc{P}_{S/I}$ an optimal Stanley partition
  of $P_{S/I}$. Consider the interval $I_X$ of $\mc{P}_{S/I}$
  containing $X$. By the maximality of $|X|$, $X$ is the upper bound
  of $I_X$. If $I_X$ is trivial, simply move $X$ from $P_{S/I}$ to
  $P_I$ to form $P_{S/J}$ and $P_J$ for the appropriate squarefree
  monomial $J$. Otherwise, notice that $I_X-X$ is isomorphic to the
  dual of $\mbf{2}^{|X|-1}-\empset$, so
  Proposition~\ref{prop:singleton-part} implies that there is a
  partition of $I_X-X$ into intervals in which each interval's upper
  bound has size $|X|-1$. We form $P_J$ from $P_I$ by adding $X$ to
  $P_I$ (and to $\mc{P}_I$ as a trivial interval in $\mc{P}_J$) and
  remove $X$ from $P_{S/I}$ to form $P_{S/J}$. 

  Replacing $I_X$ in $\mc{P}_{S/I}$ by the interval partition provided
  by Proposition~\ref{prop:singleton-part} gives a partition
  $\mc{P}_{S/J}$ that still has Stanley depth at least $k$ since the
  maximal element of each interval added has size at least $k$. We
  thus must show that $\sdepth J\leq k$ to retain the desired
  inequality.  Suppose that $\sdepth J > k$ and let $\mathcal{Q}_J$ be
  a witnessing partition.  Because $X$ was maximal in $P_{S/I}$, we
  know that $X$ is minimal in $P_J$. In particular there is an
  interval $[X,X'] \in \mathcal{Q}_J$ with $X\neq X'$ because
  otherwise $\mc{Q}_J - [X,X]$ would be a partition of $I$ having
  greater Stanley depth than $\mc{P}_I$. Let $\mathcal{P}_X$ be a the
  partition of $[X,X'] - X$ resulting from applying Proposition
  \ref{prop:singleton-part} to $[X,X'] - X$.  Let
  $\mc{Q}_I=\left(\mathcal{P}_J \cup \mathcal{P}_X\right) -
  \set{[X,X']}$. Notice that $\mc{Q}_I$ is a partition of $P_I$ in
  which the maximal element of every interval has size at least
  $\min\set{\sdepth J, |X|+1} > k$. Therefore, $\sdepth I > k$, which
  contradicts our assumption that $\sdepth I \leq \sdepth S/I = k$.

  If $P_{S/J}$ still contains a set of size greater than $k$, this
  process can be repeated until $P_{S/J}$ is a pure simplicial
  complex%
  , and the inequality on Stanley depth will be preserved throughout.



  We now prove the second property. Let $\mc{A}_I$ be the minimal
  elements of $P_I$ and let $\mc{A}_{S/I}$ be the maximal elements of
  $P_{S/I}$ (the facets of the \SR{} complex of $I$). Assume without
  loss of generality that there is no $A\in \mc{A}_{S/I}$ such that
  $n\in A$.  Let $\mc{A}'_I = \mc{A}\cap\mbf{2}^{[n-1]}$. Then
  $D[\mc{A}_{S/I}]\cup U[\mc{A}'_I]=\mbf{2}^{[n-1]}$, so by the
  minimality of $n$,
  $\sdepth U[\mc{A}'_I] > \sdepth D[\mc{A}_{S/I}]$ is witnessed by
  partitions $\mc{P}'_I$ and $\mc{P}_{S/I}$. But now
  $\mc{P}_I = \mc{P}'_I\cup \set{[\set{n},[n]]}$ is a partition of
  $U[\mc{A}_I]$ with Stanley depth equal to $\sdepth \mc{P}'_I$.
  Therefore, $\sdepth I > \sdepth S/I$, contradicting that $I$ and
  $S/I$ are a counterexample to the conjecture.


  We now proceed to prove the third property holds as well. Since we
  have already proved the first property, we may assume that
  $\Delta_I$ is pure with all facets of dimension $k-1$ where
  $k=\sdepth S/I$. Further assume that some element of $[n]$, without
  loss of generality we will say it is $n$, appears in every facet of
  $\Delta_I$. Notice that since $n$ is in every facet of
  $\Delta_I = P_{S/I}$ and this poset is a down set in $\mbf{2}^n$,
  $P_{S/I}$ can be viewed as the disjoint union of two isomorphic
  subposets of $P_{S/I}$---namely, all the subsets containing $n$ and
  all those not containing $n$. Let $P'_{S/I}$ be the subposet of
  $P_{S/I}$ consisting only of subsets of $[n-1]$. Since $P'_{S/I}$ is
  a down set of $\mbf{2}^{n-1}$, its complement $P'_I$ is an up set
  corresponding to some monomial ideal $J$ and $P'_{S/I} = P_{S'/J}$
  where $S'= K[x_1,\dots,x_{n-1}]$. By minimality, we know that
  $\sdepth J > \sdepth S'/J$ is witnessed by optimal Stanley
  partitions $\mc{P}_J$ and $\mc{P}_{S'/J}$. Now notice that adding
  $n$ to the upper bound of each interval of $\mc{P}_{S'/J}$ gives a
  Stanley partition of $P_{S/I}$ witnessing $\sdepth S/I = k$.
  Similarly, adding $n$ to the upper bounds of the intervals of
  $\mc{P}_J$ gives a Stanley partition of $P_I$ witnessing
  $\sdepth I\geq 1+\sdepth J \geq k+1$. This contradicts that $I$ and
  $S/I$ were a counterexample.
\end{proof}

Since satisfying Theorem~\ref{thm:min-counterex} when $\Delta_I$ is a
pure simplicial complex of dimension $n-2$ requires that
$P_I = \set{[n]}$, we have the following corollary.

\begin{cor}\label{cor:n-1}
  Let $I\subseteq S = K[x_1,\dots,x_n]$ be a squarefree monomial
  ideal. If the facets of $\Delta_I$ all have size $n-1$, then
  $\sdepth I > \sdepth S/I$.
\end{cor}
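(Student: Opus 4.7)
The plan is to prove the corollary directly by observing that the hypothesis forces $I$ to be a principal ideal of a very constrained form, thereby sidestepping the minimal-counterexample machinery of Theorem~\ref{thm:min-counterex}.

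Because every facet of $\Delta_I$ is a subset of $[n]$ of size exactly $n-1$, each facet has the form $[n]\setminus\{i\}$ for a unique $i\in[n]$. Let $T=\{i\in[n]\colon [n]\setminus\{i\}\text{ is a facet of }\Delta_I\}$. Then $A\subseteq[n]$ is a face of $\Delta_I$ iff $A\subseteq[n]\setminus\{i\}$ for some $i\in T$, equivalently iff $T\not\subseteq A$. Hence the unique minimal non-face of $\Delta_I$ is $T$ itself, so $I=\bigl(\prod_{i\in T}x_i\bigr)$ is principal, and $P_I=\set{A\subseteq[n]\colon T\subseteq A}=[T,[n]]$ is a single interval of $\mbf{2}^{[n]}$ with maximum element $[n]$.

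The two Stanley depths are now immediate. The one-block partition $\set{[T,[n]]}$ of $P_I$ has its sole upper bound $[n]$ of size $n$, so $\sdepth I = n$. On the other hand, the maximal elements of $P_{S/I}$ are precisely the facets of $\Delta_I$, each of size $n-1$, so every interval in any Stanley partition of $P_{S/I}$ has upper bound of size at most $n-1$, giving $\sdepth S/I \leq n-1$. Combining yields $\sdepth I = n > n-1 \geq \sdepth S/I$, as required.

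The main conceptual step---and essentially the only place intuition is needed---is recognizing that the facet-size constraint rigidly determines $I$ as a principal squarefree monomial ideal; once that is in hand, the two depth computations are mechanical. This short argument in effect replaces the reduction-through-Theorem~\ref{thm:min-counterex} route suggested in the remark preceding the corollary.
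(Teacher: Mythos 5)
Your proof is correct, and it takes a genuinely different route from the paper. The paper obtains this corollary as a one-line consequence of Theorem~\ref{thm:min-counterex}: for a pure complex of dimension $n-2$, the theorem's condition that the facets have empty intersection forces all $n$ sets of size $n-1$ to be facets, hence $P_I=\set{[n]}$, so a reduced minimal counterexample with facets of size $n-1$ cannot exist. You instead bypass the minimal-counterexample machinery entirely: from the hypothesis alone you identify $T=\set{i\colon [n]\setminus\set{i}\text{ is a facet}}$, observe that the faces are exactly the sets not containing $T$, so $I=(\prod_{i\in T}x_i)$ is principal and $P_I=[T,[n]]$ is a single interval topped by $[n]$, giving $\sdepth I=n$ exactly, while $\sdepth S/I\leq n-1$ since no element of $P_{S/I}$ has size $n$. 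Your argument is self-contained, applies verbatim to every ideal satisfying the hypothesis (with no appeal to purity reductions, the empty-intersection condition, or minimality), and yields the precise value $\sdepth I=n$; what it gives up is only brevity relative to machinery the paper has already built, since the paper's version is a genuine one-liner once Theorem~\ref{thm:min-counterex} is in hand. Both arguments are valid, and yours is arguably the cleaner proof of the corollary as literally stated.
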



We conclude this section with a lemma that looks at what happens when $P_I$
consists only of sets of size at least $k$.
\begin{lem}
  \label{lem:cover-k-1} Let $I\subseteq S = K[x_1,\dots,x_n]$ be a
  squarefree monomial ideal. If $P_{S/I}$ contains all $(k-1)$-sets and
  $k\leq (n-1)/2$, then $\sdepth(I) > k$.
\end{lem}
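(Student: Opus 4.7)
The plan is to exhibit a Stanley partition of $P_I$ in which every interval has top of size at least $k+1$. Since $P_{S/I}$ is a down set containing all $(k-1)$-sets, it contains all subsets of $[n]$ of size at most $k-1$, so the complementary up set $P_I$ consists entirely of subsets of $[n]$ of size at least $k$. Write $A_k$ and $A_{k+1}$ for the $k$-element and $(k+1)$-element members of $P_I$, and note that because $P_I$ is upward closed, every $(k+1)$-subset of $[n]$ that contains some element of $A_k$ already lies in $A_{k+1}$.

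The key step is to produce an injection $M\colon A_k\to A_{k+1}$ with $A\subsetneq M(A)$ for every $A\in A_k$. I would view this as a bipartite matching problem between $A_k$ and $A_{k+1}$ (with edges given by containment) and verify Hall's condition via the normalized matching / local LYM property of the Boolean lattice. Concretely, given any $\mathcal{F}\subseteq A_k$ with upper shade $\sigma(\mathcal{F})=\set{B\in \binom{[n]}{k+1}\colon B\supseteq A\text{ for some }A\in \mathcal{F}}\subseteq A_{k+1}$, double-counting covering pairs gives
\[
|\mathcal{F}|(n-k)\;=\;\#\set{(A,B)\colon A\in \mathcal{F},\ A\subset B,\ |B|=k+1}\;\leq\;|\sigma(\mathcal{F})|(k+1).
\]
The hypothesis $k\leq (n-1)/2$ is equivalent to $n-k\geq k+1$, so this forces $|\sigma(\mathcal{F})|\geq |\mathcal{F}|$, and Hall's condition is satisfied.

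With such an $M$ in hand, I would partition $P_I$ as
\[
\set{[A,M(A)]\colon A\in A_k}\;\cup\;\set{[B,B]\colon B\in P_I,\ |B|\geq k+1,\ B\notin M(A_k)}.
\]
The two-element intervals $[A,M(A)]$ are pairwise disjoint by the injectivity of $M$ together with the size comparison $|A|=k\neq k+1=|M(A)|$, and together with the singleton intervals on the remaining elements of size at least $k+1$ they cover every set in $P_I$ exactly once. Each interval has top of size at least $k+1$, so $\sdepth(P_I)\geq k+1>k$, giving $\sdepth(I)>k$.

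The main (and only real) obstacle is verifying Hall's condition; once the local LYM inequality is combined with the arithmetic input $n-k\geq k+1$ supplied by $k\leq (n-1)/2$, the rest of the construction is essentially a bookkeeping exercise.
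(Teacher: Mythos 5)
Your proposal is correct and takes essentially the same approach as the paper: both reduce the problem to finding a matching saturating the $k$-sets of $P_I$ into the $(k+1)$-sets and then use its edges as intervals alongside trivial singleton intervals. The only difference is cosmetic: where the paper invokes ``a corollary of Hall's Theorem'' (that a bipartite graph is matchable when all degrees on one side are at least $n-k$ and on the other at most $k+1$, with $n-k\geq k+1$), you spell out that corollary explicitly via the double-counting argument for Hall's condition.
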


\begin{proof}
  To show that $\sdepth
  I>k$, it suffices to find a complete matching from the $k$-sets in $P_I$ to
  the $(k+1)$-sets. Looking only at the $k$- and $(k+1)$-sets in $P_I$
  as a bipartite graph, we see that the partite set consisting of the
  $k$-sets is $(n-k)$-regular. The vertices in the partite set
  consisting of the $(k+1)$-sets all have degree at most $k+1$. Since
  $k\leq (n-1)/2$, $k+1\leq n-k$. Thus, by a corollary of Hall's
  Theorem, there is a complete matching from the $k$-sets to the
  $(k+1)$-sets. Use the edges of the matching to define intervals in a
  Stanley partition of $P_I$ with trivial intervals used to cover any
  set not in one of the matching intervals.
\end{proof}

\section{Combinatorial criteria}
\label{sec:comb-crit}
In this section, we continue to assume that $S=K[x_1,\dots,x_n]$ and
$I$ is a squarefree monomial ideal of $S$. We consider the posets
$P_I$ and $P_{S/I}$ as complementary subposets of $\mbf{2}^n$. If we
are focusing on proving that $\sdepth I> \sdepth S/I$,
Theorem~\ref{thm:min-counterex} allows us to restrict our attention to the
case where every maximal element of $P_{S/I}$ is a $k$-set where
$k=\sdepth S/I$. In this section, we establish some necessary
conditions on $P_{S/I}$ for this to happen. Although this conjecture
is our motivation, the conditions discussed here apply to a broader
class of posets and will be established in the fullest context.

The first criterion we establish is what we call the
\emph{combinatorial criterion}. Assuming $J\subseteq I$ are squarefree
monomial ideals of $S$, there exist antichains
$\mc{A},\mc{B}\subseteq\mbf{2}^n$ such that the poset $P_{I/J}$ is the
intersection of $U[\mc{A}]$ and $D[\mc{B}]$. In particular, $\mc{A}$
is the antichain of minimal elements of $P_{I/J}$ and $\mc{B}$ is the
antichain of maximal elements. If we wish to show that
$\sdepth I/J\geq k$, we must find a partition $\mc{P}_{I/J}$ of
$P_{I/J}$ into intervals so that each interval's upper bound has size
at least $k$. In fact, it suffices by a straightforward generalization
of property (1) of Theorem~\ref{thm:min-counterex} to put every set of
size at most $k$ into an interval with upper bound of size $k$,
leaving the remaining sets in trivial intervals.

For ease of explanation, let us temporarily focus on partitioning
$P_{S/I}$ where the maximal elements are all of size $k$. This
naturally requires an interval from the empty set to a $k$-set be
included in $\mc{P}_{S/I}$. Therefore, $P_{S/I}$ must include at least
$k$ singletons. Proceeding now to consider the number of $2$-sets in
$P_{S/I}$, we notice that the first interval requires at least
$\binom{k}{2}$ of them and that each singleton not covered by the
first interval will be the minimal element of an interval in
$\mc{P}_{S/I}$ covering $k-1=\binom{k-1}{1}$ $2$-sets. Thus, we have a
lower bound on the number of $2$-sets in $P_{S/I}$. If $P_{S/I}$ does
not have this number of $2$-sets, we say that $P_{S/I}$ \emph{fails to
satisfy the combinatorial criterion} (and we must necessarily have
$\sdepth S/I < k$).

Instead of introducing cumbersome notation to give equations that must
have integer solutions if the combinatorial criterion is satisfied, we
instead describe the process of verifying the combinatorial criterion
for a poset $P_{I/J}$. We begin with a vector
$\mbf{a}=(a_0,a_1,\dots,a_k)$ where $a_i$ is the number of $i$-sets in
$P_{I/J}$ for $i=0,\dots,k$. (If $P_{I/J}$ is considered as a
(relative) simplicial complex, then this is its $f$-vector, truncated
after counting the $(k-1)$-dimensional simplicies.) As we proceed,
$\mbf{a}$ will track the number of $i$-sets not already in an interval
of an interval partition in which all maximal elements have size
$i$. (Note that we do not actually construct the partition; we only
consider what structure it \emph{must} have if the upper bound of each
interval has size $k$.)  Supposing that all sets of size less than $i$
have been successfully covered, we know that there are $a_i$ remaining
sets of size $i$ to cover. If we are to successfully complete the
partition, each of these is the minimal element of an interval with
maximal element of size $k$, so each interval consumes
$\binom{k-i}{j-i}$ sets of size $j=i,\dots,k$. The vector $\mbf{a}$ is
then updated by decreasing $a_j$ by $a_i\binom{k-i}{j-i}$ for
$j=i,\dots,k$. If coordinate $a_j$ of $\mbf{a}$ is now negative for
some $j$, we know that there were insufficiently many $j$-sets for a
partition witnessing Stanley depth $k$ to exist. In this case, we say
that the \emph{combinatorial criterion is violated}. If we can repeat
this process up to $i=k$ without ever obtaining a negative entry in
$\mbf{a}$, then the \emph{combinatorial criterion is satisfied} and we
know that there are enough sets of each size in $P_{I/J}$ to support a
partition of the type sought. We note that satisfying the
combinatorial criterion is equivalent to the $h$-vector of $P_{S/I}$
(when it can be viewed as a pure simplicial complex) being nonnegative
by Proposition III.2.3 of Stanley's monograph
\cite{stanley:comb-comm-alg}.

Unfortunately, while the combinatorial criterion is necessary for
$P_{S/I}$ to have Stanley depth $k$, it is not sufficient. To see why,
suppose that $P_{S/I}$ is the down set of the antichain
$\set{123,124,125,134,345,234}$ in $\mbf{2}^5$. The interval with the
empty set as its minimal element covers three singletons and three
$2$-sets. This leaves two singletons, seven $2$-sets, and five
$3$-sets uncovered. Intervals beginning at the remaining two
singletons cover four more $2$-sets (two each) and two more
$3$-sets. Now $\mbf{a} = (0,0,3,3)$. Covering the remaining $2$-sets
requires three $3$-sets, reducing $\mbf{a}$ to the zero vector. Thus,
this poset does satisfy the combinatorial criterion. However, we will
now see that $\sdepth P_{S/I} <3$. This is because the strict up set
of $\set{5}$ in $P_{S/I}$ is $U=\set{15,25,35,45,125,345}$. Whatever
interval covers $\set{5}$ will consume two $2$-sets from $U$ and one
$3$-set, leaving behind two $2$-sets and one $3$-set. There is no way
for a single interval not containing $\set{5}$ to cover these three
remaining sets, so a Stanley partition of $P_{S/I}$ must contain an
interval with maximal element of size less than $3$.

The way in which the preceding example failed to have Stanley depth
$3$ is instructive in providing a stronger version of the
combinatorial criterion. Since it is not sufficient to have enough
sets of each size in $P_{I/J}$, we add the additional requirement that
for each set $A$ in $P_{I/J}$, the subposet $U[A]\cap P_{I/J}$ must
satisfy the combinatorial criterion. If $P_{I/J}$ satisfies this more
stringent requirement, we say that it satisfies the \emph{strong
  combinatorial criterion}. The example given above violates the
strong combinatorial criterion because the combinatorial criterion is
not satisfied for the up set of $\set{5}$. To see that the strong
combinatorial criterion is necessary for $P_{I/J}$ to have Stanley
depth $k$, we consider the restriction of an optimal Stanley partition
$\mc{P}_{I/J}$ of $P_{I/J}$ to the closed up set $U$ (inside
$P_{I/J}$) of some set $A\in P_{I/J}$. (Form this restriction by
intersecting each interval of $\mc{P}_{I/J}$ with $U$ and discarding
empty intervals that result.) Notice that $U$ has a single minimal
element $A$ and each maximal element of $U$ is a superset of
$A$. Thus, $U$ is isomorphic to $P_{S'/I'}$ for some ring $S' = K[T]$
with $T\subset\set{x_1,\dots,x_n}$ and squarefree monomial ideal
$I'\subseteq S'$. Since the restriction of $\mc{P}_{I/J}$ to $U$ gives
an interval partition of $U$ in which the upper bound of each interval
has size $k$, we know from above that $U$ must satisfy the
combinatorial criterion. From the simplicial complex perspective,
satisfying the strong combinatorial criterion is equivalent to the
$h$-vector of every link being nonnegative.

Because the strong combinatorial criterion can be checked by computer
via simple counting methods, it is much more efficient than verifying
the existence of a particular Stanley partition by brute
force. However, we also note that a very large fraction of squarefree
monomial ideals for which the maximal elements of $P_{S/I}$ are all of
the same size satisfy the strong combinatorial criterion for $k\leq
n/2$ and for $k>n/2$, the majority violate the strong combinatorial
criterion. See Tables~\ref{tab:n-6} and \ref{tab:n-7} in
Section~\ref{sec:compute} for relevant enumerations.

At one point, computer investigations emboldened us to believe that if
$\Delta$ is a pure simplicial complex of dimension $k-1$, then
$\sdepth(S/I_\Delta) = k$ if and only if $\Delta$ satisfies the
strong combinatorial criterion. However, the counterexample $C_3$ of
Duval et al. in \cite[Theorem 3.5]{duval:sdepth-counterex} has $n=16$,
is Cohen-Macaulay (hence pure) of dimension $3$, and satisfies the
strong combinatorial criterion. However, the Stanley depth of $S/I_{C_3}$ is
only $3$ for that example. 
It would be interesting to determine if every pure $k$-dimensional
simplicial complex $\Delta$ with $\sdepth(S/I_\Delta) < k+1$ contains
an antichain that provides a combinatorial witness to the fact that
the Stanley depth is not $k+1$.



\section{Splitting}\label{sec:splits}

Our final reduction result focuses on the scenario where $P_{S/I}$ can
be split into two pieces, one consisting of all sets containing an
element $x\in[n]$ and the other consisting of all sets not containing
$x$, in a particular way that allows for a Stanley partition to be
constructed. This technique will be useful in computational results
that come later. In what follows, we will refer to an antichain in the
subset lattice satisfying the strong combinatorial criterion, by which we
mean that its closed downset satisfies the strong combinatorial criterion.

\begin{defn}\label{defn:splits}
  Let $\mc{A}$ be an antichain in $\mbf{2}^{[n]}$ and $x\in [n]$. Let
  $\mc{A}_x = \set{A\in\mc{A}\mid x\in A}$ and $\mc{A}'_x =
  \set{A-x\mid A\in\mc{A}_x}$. We say that $\mc{A}$ \emph{splits over
    $x$} provided that
  \begin{enumerate}[(i)]
  \item for all $S\in \mc{A}'_x$, there exists $T\in \mc{A}-\mc{A}_x$
    such that $S\subseteq T$ and
  \item $\sdepth(D[\mc{A} - \mc{A}_x])\geq \sdepth(D[\mc{A}])$.
  \end{enumerate}
  The antichain $\mc{A}$ \emph{splits} provided that there exists
  $x\in [n]$ such that $\mc{A}$ splits over $x$.
\end{defn}

To illustrate this definition, we consider the following example.

\begin{examp}\label{ex:splits}
  Let $\mc{A}=\set{  \set{1, 2, 5}, \set{ 2, 4,6}, \set{1, 2, 3},
    \set{2, 4, 5}, \set{3, 4, 5}, \set{2, 3, 5}}$. We see immediately
  that $\mc{A}$ splits over $6$, since $\mc{A}_6 = \set{\set{2,4,6}}$
  and $\mc{A}'_6 = \set{2,4}\subset \set{2,4,5}$. (Verification of the
  combinatorial criterion requirements is straightforward but
  tedious.) Similarly, $\mc{A}$ splits over $1$ and $3$. However,
  $\mc{A}$ does not split over $2$ since $\mc{A}-\mc{A}_2 =
  \set{\set{3,4,5}}$ and $\set{1,5}\in\mc{A}'_2$ but
  $\set{1,5}\not\subseteq \set{3,4,5}$.
\end{examp}

We now show that the search for a counterexample to the conjecture can
be restricted to antichains that do not split.

\begin{lem}\label{lem:splits}
  Let $I\subseteq S = K[x_1,\dots,x_n]$ be a squarefree monomial
  ideal. If $\sdepth I\leq \sdepth S/I$ and the antichain of maximal
  elements of $P_{S/I}$ splits, then there exists a squarefree
  monomial ideal $J\subseteq S' = K[y_1,\dots,y_{n'}]$ with $n'\leq n$ and
  $\set{y_1,\dots,y_{n'}}\subseteq \set{x_1,\dots,x_n}$ such that
    $\sdepth J\leq \sdepth S'/J$ and the antichain of maximal elements
    of $P_{S'/J}$ does not split.
\end{lem}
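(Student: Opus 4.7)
The plan is to proceed by induction on $n$. When $\mc{A}$, the antichain of maximal elements of $P_{S/I}$, does not split, take $J=I$, $S'=S$, and $n'=n$; the conclusion is immediate. Otherwise $\mc{A}$ splits over some $x\in[n]$, and I set $S_1 = K[\set{x_1,\dots,x_n}\setminus\set{x}]$ and let $J_1\subseteq S_1$ be the squarefree monomial ideal whose Stanley--Reisner complex equals $D[\mc{A}-\mc{A}_x]$. The target would be to show that $J_1$ is again a counterexample, i.e.\ $\sdepth J_1\leq \sdepth S_1/J_1$; the inductive hypothesis applied to $J_1$ (which lives in a ring with one fewer variable) then supplies the desired non-splitting counterexample $J$.

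Two of the three ingredients come for free. Condition~(ii) of Definition~\ref{defn:splits} gives directly $\sdepth S_1/J_1\geq \sdepth S/I$. Condition~(i) yields the inclusion $D[\mc{A}'_x]\subseteq D[\mc{A}-\mc{A}_x]$, and from this one verifies the structural decomposition
\[P_I \;=\; P_{J_1}\sqcup(\set{x}+Q), \qquad Q := \mbf{2}^{[n-1]}\setminus D[\mc{A}'_x],\]
with $P_{J_1}\subseteq Q$. It then suffices to establish the lifting inequality $\sdepth I\geq \sdepth J_1$, for then chaining
\[\sdepth J_1\leq \sdepth I\leq \sdepth S/I\leq \sdepth S_1/J_1\]
finishes the counterexample inheritance.

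For the lifting inequality I would start from an optimal Stanley partition $\mc{P}_{J_1}$ of $P_{J_1}$ with all upper bounds of size at least $d:=\sdepth J_1$. Because $P_{J_1}\subseteq Q$, each interval $[A,B]\in \mc{P}_{J_1}$ extends to a valid interval $[A,B\cup\set{x}]\subseteq P_I$ whose upper bound has size $|B|+1\geq d+1$; these jointly cover $P_{J_1}\cup(\set{x}+P_{J_1})$. The residue $\set{x}+(P_{S_1/J_1}\setminus D[\mc{A}'_x])$ must then be partitioned with intervals whose upper bounds have size $\geq d$. For this one invokes an optimal Stanley partition $\mc{P}_{S_1/J_1}$ of $P_{S_1/J_1}$: intervals whose minimum already lies in $Q$ translate cleanly under $\set{x}+(\cdot)$, while the ``straddling'' intervals (minimum in $D[\mc{A}'_x]$, maximum in $Q$) have their $Q$-portions subdivided separately.

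The hard part will be this final subdivision. Each straddling interval $[A,B]$ gives an up-set $[A,B]\cap Q$ inside the Boolean sublattice $[A,B]$ that must be partitioned into intervals whose upper bounds, after translation by $\set{x}$, remain of size $\geq d$. Condition~(i) carries the weight here, since it forces every facet of $D[\mc{A}'_x]$ to extend to a facet of $D[\mc{A}-\mc{A}_x]$; this provides enough room inside each straddling interval to perform the subdivision without sacrificing upper-bound size. Finally, the overall induction terminates because $n$ strictly decreases whenever a split occurs.
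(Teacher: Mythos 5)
Your strategy is genuinely different from the paper's, and it has a real gap at exactly the point you label ``the hard part.'' The paper does not try to show that the smaller ideal $J_1$ (with complex $D[\mc{A}-\mc{A}_x]$) is itself a counterexample. Instead it takes a counterexample that splits with $n$ minimal, so that in fewer variables there is \emph{no} counterexample of any kind; minimality then supplies an interval partition of $\mbf{2}^{[n-1]}-D[\mc{A}-\mc{A}_x]$ with tops of size at least $k+1$ (using condition (ii), $\sdepth D[\mc{A}-\mc{A}_x]\geq k=\sdepth S/I$) and a partition of $\mbf{2}^{[n-1]}-D[\mc{A}'_x]$ with tops of size at least $k$, which after adjoining $x$ to both ends of each interval has tops of size at least $k+1$. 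Condition (i) guarantees these two families tile $P_I$, so $\sdepth I\geq k+1>\sdepth S/I$, a contradiction. In particular the paper never needs to partition the relative piece $D[\mc{A}-\mc{A}_x]\setminus D[\mc{A}'_x]$, nor to compare $\sdepth J_1$ with $\sdepth I$.

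The gap in your version is that the lifting inequality $\sdepth I\geq d:=\sdepth J_1$ is asserted but not proved, and your proposed mechanism cannot deliver it. The residue $\set{x}+\bigl(D[\mc{A}-\mc{A}_x]\setminus D[\mc{A}'_x]\bigr)$ must be covered by intervals with tops of size at least $d$. Translating the intervals of an optimal partition of $P_{S_1/J_1}$ only yields tops of size at least $\sdepth(S_1/J_1)+1$, and you have no a priori comparison between $\sdepth(S_1/J_1)$ and $d$; obtaining one is essentially the inequality the whole paper is about, and in general the Stanley depth of an ideal can exceed that of its quotient by far more than $1$ (e.g.\ the maximal ideal, where the gap is $\lceil n/2\rceil$), so this step cannot be waved through. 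The ``straddling'' intervals are worse still: there you must partition the complement of a down set inside a Boolean interval into intervals with prescribed large tops, which is precisely an instance of the general sdepth-of-an-ideal problem; condition (i) only gives $D[\mc{A}'_x]\subseteq D[\mc{A}-\mc{A}_x]$, i.e.\ the decomposition $P_I=P_{J_1}\sqcup(\set{x}+Q)$, and provides no control over such a subdivision. Until $\sdepth I\geq\sdepth J_1$ (or a workable substitute measured against the fixed value $k$) is actually established, your induction does not start; the paper's minimal-counterexample device is what replaces this missing ingredient.
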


\begin{proof}

  Suppose that $I$ and $S/I$ are minimal (in terms of $n$) such that
  $\sdepth I\leq \sdepth S/I$ and the antichain $\mc{A}$ of maximal
  elements of $P_{S/I}$ splits. Without loss of generality, we assume
  that $\mc{A}$ splits over $n$. By Theorem~\ref{thm:min-counterex},
  we may assume that every element of $\mc{A}$ is a $k$-set and
  $\sdepth(P_{S/I}) = k$. (Showing that the reductions of
  Theorem~\ref{thm:min-counterex} preserve splitting is a
  straightforward exercise.) Let
  $\mc{A}_n = \set{A\in\mc{A}\mid n\in A}$ and
  $\mc{B} = \mc{A} - \mc{A}_n$. Now $\mc{B}$ is an antichain of
  $k$-sets in $\mbf{2}^{[n-1]}$, so by minimality there is a partition
  $\mc{P}_{\mc{B}}$ witnessing
  $\sdepth(\mbf{2}^{[n-1]}-D[\mc{B}])> \sdepth(D[\mc{B}]) \geq k$. Furthermore, the first
  condition in the definition of what it means for $\mc{A}$ to split
  over $n$ ensures that no set covered by $\mc{P}_{\mc{B}}$ belongs to
  $D[\mc{A}]$. Similarly, $\mc{A}'_n=\set{A-n\mid A\in\mc{A}_n}$ is an
  antichain of $(k-1)$-sets in $\mbf{2}^{[n-1]}$, and there exists a
  partition $\mc{P}_{\mc{A}'_n}$ witnessing
  $\sdepth(\mbf{2}^{[n-1]} - D[\mc{A}'_n]) \geq k$. Form a new
  partition
  \[\mc{P}_n = \set{[S\cup\set{n},T\cup\set{n}]\mid [S,T]\in
    \mc{P}_{\mc{A}'_n}}. \]
  Let $\mc{D} = \set{S\cup\set{n}\mid S\in D[\mc{A}'_n]}$. Then
  $\mc{D}\cup\mc{P}_n = [\set{n},[n]]$ and
  $\mc{P}_{\mc{B}}\cup D[\mc{B}] = [\empset,[n-1]]$. Thus
  $\mc{P}_\mc{B}\cup \mc{P}_n$ is a partition of
  $\mbf{2}^{[n]} - D[\mc{A}]$ witnessing $\sdepth(I)\geq k+1$. But now
  we have
  $\sdepth(S/I)\leq k$, contradicting that $\sdepth I\leq \sdepth S/I$.
\end{proof}

\section{The $n=4$ and $n=5$ squarefree cases}

In this section, we address the conjecture that $\sdepth I > \sdepth
S/I$ for the cases where $S$ is the polynomial ring over a field $K$
in four or five indeterminates.  The first proof is immediate from our
reduction results, but we identify which result takes care of each of
the various cases.

\begin{thm}
  Let $S=K[x_1,\dots,x_4]$. If $I$ is a squarefree monomial ideal
  of $S$, then $\sdepth I > \sdepth S/I$.\label{thm:n4-sqfree}
\end{thm}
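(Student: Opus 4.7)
The plan is to assume for contradiction there is a squarefree monomial ideal $I \subseteq S = K[x_1,\dots,x_4]$ with $\sdepth I \leq \sdepth S/I$, apply Theorem~\ref{thm:min-counterex} to pass to a minimal reduced counterexample $I' \subseteq S' = K[y_1,\dots,y_{n'}]$ in which $\Delta_{I'}$ is pure of dimension $k-1$ (with $k = \sdepth S'/I'$), covers $[n']$ by its facets, and has empty facet intersection, and then to dispatch every remaining case by a specific earlier result or a short direct argument.

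If $n' \le 3$, Theorem~\ref{thm:n-3} immediately contradicts $\sdepth I' \le \sdepth S'/I'$. So assume $n' = 4$; then $k \in \{1, 2, 3\}$. For $k = 3$ the facets of $\Delta_{I'}$ all have size $n'-1 = 3$, and Corollary~\ref{cor:n-1} applies. For $k = 1$ the covering condition forces $\emptyset \in P_{S'/I'}$ (vacuously true), and $k = 1 \leq (n'-1)/2 = 3/2$, so Lemma~\ref{lem:cover-k-1} yields $\sdepth I' > 1 = k$.

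The main case is $k = 2$, where $\Delta_{I'}$ is a graph $G$ on $[4]$ with no isolated vertex and no universal vertex. Applying the combinatorial criterion of Section~\ref{sec:comb-crit} to the truncated $f$-vector $(1, 4, |E(G)|)$ forces $|E(G)| \geq 3$, since otherwise $\sdepth S'/I' < 2$, contradicting $k = 2$. I then plan to construct an interval partition of $P_{I'}$ in which every upper bound has size at least $3$. Because every singleton lies in $P_{S'/I'}$, the only sets of size below $3$ in $P_{I'}$ are the non-edges $\overline{E}$ of $G$, so it suffices to assign each non-edge $e$ to a distinct $3$-subset $T \supseteq e$ and leave the remaining $3$-subsets and $[4]$ in trivial intervals.

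The required assignment is provided by Hall's theorem. Each $2$-subset of $[4]$ is contained in exactly two $3$-subsets, and two distinct $2$-subsets share a common $3$-subset neighbor precisely when they meet (in which case they share exactly one). Consequently $|N(X)| \geq 2|X| - \binom{|X|}{2} \geq |X|$ for every $X \subseteq \overline{E}$ with $|X| \leq 3$, while $|X| \geq 4$ would require $|E(G)| \leq 2$, already excluded. This produces the partition witnessing $\sdepth I' \geq 3 > \sdepth S'/I'$, contradicting Theorem~\ref{thm:min-counterex}. The principal obstacle is the $k = 2$ case: unlike $k \in \{1, 3\}$, it is not a one-line consequence of the reductions in Section~\ref{sec:lemmas}, but needs this short Hall-matching argument after the combinatorial criterion removes the only problematic $|E(G)| \leq 2$ configuration.
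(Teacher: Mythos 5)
Your proof follows essentially the same architecture as the paper's: reduce via Theorem~\ref{thm:min-counterex}, dispose of $k=3$ with Corollary~\ref{cor:n-1}, and handle $k=1$ and $k=2$ via the covering-by-singletons observation. The differences are minor in most places --- you appeal to Theorem~\ref{thm:n-3} for $n'\le 3$ and to Lemma~\ref{lem:cover-k-1} for $k=1$ where the paper argues more informally that $\mc{A}_I$ lies entirely in size $\ge 2$, but all of that is sound.

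The genuinely noteworthy point is the $k=2$ case, and you were right to flag it as the ``principal obstacle.'' The paper's proof at this step writes ``$k\le n/2$'' and invokes Lemma~\ref{lem:cover-k-1}; but that lemma's hypothesis is $k\le (n-1)/2$, which fails when $n=4$ and $k=2$ (indeed, the lemma's proof relies on $k+1\le n-k$, i.e.\ $3\le 2$ here, which is false). So Lemma~\ref{lem:cover-k-1} does not directly apply, and one needs an extra ingredient to rule out the small-edge configurations for which the naive degree comparison would break. You supply exactly that: the combinatorial criterion applied to the truncated $f$-vector $(1,4,|E(G)|)$ forces $|E(G)|\ge 3$ (otherwise $\sdepth S'/I'<2$), and then the inclusion-exclusion bound $|N(X)|\ge 2|X|-\binom{|X|}{2}$ verifies Hall's condition for $|X|\le 3$, the only sizes that can occur once $|E(G)|\ge 3$. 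Your argument is correct (two non-edges share at most one common $3$-superset in $\mbf{2}^{[4]}$, precisely when they meet), so you obtain the required matching and hence $\sdepth I'\ge 3$. In short, your writeup is not merely a restatement of the paper's proof --- it repairs a misapplication of Lemma~\ref{lem:cover-k-1} at the boundary $k=n/2$ by routing through the combinatorial criterion and a direct Hall argument.
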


\begin{proof}
  We consider the posets $P_I$ and $P_{S/I}$ as complementary subposets of
  $\mbf{2}^4$. Let $\mc{A}_I$ be the minimal elements of $P_I$ and let
  $\mc{A}_{S/I}$ be the maximal elements of $P_{S/I}$.  By
  Theorem~\ref{thm:min-counterex}, we may assume that all elements of
  $\mc{A}_{S/I}$ have size $k$. The cases where $k=0$ and $k=4$ are
  trivial.

  If $k=1$, then by Theorem~\ref{thm:min-counterex}, a minimal
  counterexample to the theorem would require that $\mc{A}_{S/I}$
  consists precisely of the four singleton subsets of $[4]$. But then
  $\mc{A}_I$ contains only elements of size $2$, and the inequality of
  the theorem holds trivially.

  If $k=2$, it suffices to notice that $P_{S/I}$ must contain the
  four singleton subsets of $[4]$ by Theorem~\ref{thm:min-counterex}. But
  then all $(k-1)$-sets are contained in $P_{S/I}$ and $k\leq n/2$, so
  the inequality holds by Lemma~\ref{lem:cover-k-1}.

  The case when $k=3$ follows from Corollary~\ref{cor:n-1}. 
\end{proof}

We now proceed to the $n=5$ case, which takes a more intricate
argument. We know from our earlier results that the facets of
$\Delta_I$ (equivalently, maximal elements of $P_{S/I}$) form a uniform
hypergraph on $[n]$. Thus, for ease of reading, most of the discussion
is phrased in the language of hypergraphs instead of monomial ideals,
simplicial complexes, or posets. Recall that a hypergraph is
\emph{$k$-uniform} if every hyperedge has size $k$. The \emph{degree} of a
vertex in a (hyper)graph is the number of edges containing that
vertex. A (hyper)graph is called \emph{$d$-regular} if every vertex
has degree $d$. The \emph{degree sequence} of a (hyper)graph is the
nonincreasing sequence of its vertex degrees. 

\begin{thm}
  \label{thm:n5-sqfree} Let $S=K[x_1,\dots,x_5]$. If $I$ is a
  squarefree monomial ideal of $S$, then $\sdepth I> \sdepth S/I$.
\end{thm}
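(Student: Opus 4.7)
The strategy is to apply the reduction results of Sections~\ref{sec:lemmas}--\ref{sec:splits} to narrow the structure of a hypothetical minimal counterexample, and then to dispose of the one remaining case by direct analysis.

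By Theorem~\ref{thm:min-counterex}, a minimal counterexample has $P_{S/I}$ pure with all facets of size $k=\sdepth S/I$, with $\bigcup \mc{A}_{S/I}=[5]$ and $\bigcap \mc{A}_{S/I}=\emptyset$. Of the six possible values of $k$, five are immediate. The cases $k=0$ and $k=5$ are vacuous. When $k=1$, the reduction forces $\mc{A}_{S/I}$ to equal $\set{\set{1},\dots,\set{5}}$, so every element of $P_I$ has size at least $2$ and trivially $\sdepth I\geq 2$. For $k=2$, every singleton lies in $P_{S/I}$; since $k=2=(n-1)/2$, Lemma~\ref{lem:cover-k-1} gives $\sdepth I>2$. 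The case $k=4=n-1$ is precisely Corollary~\ref{cor:n-1}.

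The only remaining case is $k=3$, in which $\mc{A}_{S/I}$ is a $3$-uniform hypergraph $H$ on $[5]$. I would pass to the complementary graph $G$ on $[5]$ whose edges are the $2$-subset complements of facets of $H$. The conditions from Theorem~\ref{thm:min-counterex} translate to: $G$ has no isolated vertex (no vertex lies in every facet) and $G$ is not contained in any star (every vertex is covered by some facet). By Lemma~\ref{lem:splits}, we may further assume that $\mc{A}_{S/I}$ does not split over any $x\in[5]$. A short Helly-style argument then shows that such a $G$ admits at most two vertex covers of size~$2$: three such covers would either share a common vertex (making $G$ a subgraph of a star) or would form a ``triangle'' of pairs $\set{a,b},\set{a,c},\set{b,c}$ (forcing the two vertices outside $\set{a,b,c}$ to be isolated in $G$), both of which possibilities are excluded. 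Consequently $f_2\geq 10-2=8$, and the combinatorial criterion for $\sdepth P_{S/I}\geq 3$ then gives $\abs{E(G)}\geq f_2-4\geq 4$.

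At this point, the conclusion would follow from a case analysis on the finite family of graphs $G$ meeting all these constraints. For each candidate $G$, I would either verify that the antichain of facets splits over some vertex (contradicting minimality via Lemma~\ref{lem:splits}) or construct an explicit Stanley partition of $P_I$ with all interval upper bounds of size at least $4$. The general recipe for the partition is to pair each $3$-set in $P_I$ with one of its $4$-supersets; any $2$-sets in $P_I$ (coming from degree-$2$ minimal generators of $I$) are absorbed into intervals of the form $[\set{i,j},\set{i,j,k,\ell}]$, which simultaneously cover two $3$-sets of $P_I$, and a single interval ending at $[5]$ is available to pick up a remaining chain of $3$-, $4$-, and $5$-sets.

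The main obstacle is this final case analysis, which is not a one-line invocation of a prior lemma but a finite enumeration requiring some care. The tightest situations are those with the smallest number of facets, where the supply of $4$-sets serving as upper bounds (always five) is most strained against the $3$-sets in $P_I$ demanding to be covered. In those cases I expect Lemma~\ref{lem:splits} to dispose of most configurations, leaving only a small handful to be resolved by hand using Stanley partitions of the form described above.
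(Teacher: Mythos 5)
Your treatment of $k=0,1,2,4,5$ coincides with the paper's (Theorem~\ref{thm:min-counterex}, Lemma~\ref{lem:cover-k-1}, Corollary~\ref{cor:n-1}), and your setup for $k=3$ is sound and even a pleasant reformulation: identifying the $2$-sets of $P_I$ with the $2$-element vertex covers of the complement graph $G$, showing there are at most two of them (your stated dichotomy for three covers omits the subcase where two of the covers are disjoint, but that is ruled out by the no-isolated-vertex condition, so the count is right), and running the combinatorial criterion to get $f_2\geq 8$ and at least four facets all check out and recover the paper's ``at least four hyperedges'' step by a different route.

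The genuine gap is that the $k=3$ case is never actually finished. Everything after ``the conclusion would follow from a case analysis on the finite family of graphs $G$'' is a statement of intent (``I would either verify \dots or construct \dots'', ``I expect Lemma~\ref{lem:splits} to dispose of most configurations, leaving only a small handful to be resolved by hand''), and that unperformed enumeration is precisely where the paper's proof does its real work. The paper avoids enumerating admissible hypergraphs altogether: it proves, by a delicate analysis of the possible degree sequences of four-edge subhypergraphs (using the strong combinatorial criterion on up sets of vertices at two points), that any admissible $\mc{H}$ contains a four-edge subhypergraph with degree sequence $(3,3,3,2,1)$, pins that subhypergraph down up to relabeling as $\set{123,124,145,234}$, and then observes that the only $2$-sets possibly in $P_I$ are $25$ and $35$, so the intervals $[\set{2,5},\set{1,2,4,5}]$ and $[\set{3,5},\set{1,3,4,5}]$ (subdivided if $\mc{H}$ has further facets) together with trivial intervals witness $\sdepth I\geq 4$; notably, Lemma~\ref{lem:splits} is not needed at all for $n=5$. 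Your constraints (no isolated vertex, not in a star, at most two $2$-covers, at least four edges, non-splitting) still leave a sizeable family of graphs, and you have neither enumerated it nor supplied the splitting verifications and explicit partitions you defer to; the claim that your interval recipe always succeeds is exactly the nontrivial content that must be proved. As it stands, the proposal is a plausible plan for the decisive case rather than a proof of it.
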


\begin{proof}
  Similarly to the proof of Theorem~\ref{thm:n4-sqfree}, we consider
  the posets $P_I$ and $P_{S/I}$ as complementary subposets of
  $\mbf{2}^5$ with $\mc{A}_I$ the minimal elements of $P_I$ and
  $\mc{A}_{S/I}$ the maximal elements of
  $P_{S/I}$. Theorem~\ref{thm:min-counterex} allows us to assume that all
  elements of $\mc{A}_{S/I}$ have size $k$ for some $k$ with
  $0\leq k\leq 5$. The desired inequality is trivially true in the
  cases $k=0$ and $k=5$. In the case with $k=1$, the inequality also
  follows immediately from Theorem~\ref{thm:min-counterex} since either some
  element of $[5]$ does not appear in any set in $\mc{A}_{S/I}$ or all
  elements of $\mc{A}_I$ have size at least $2$.

  If $k=2$, we note that if every element of $[5]$ appears in some set
  in $\mc{A}_{S/I}$, then all singletons are in $P_{S/I}$. Since
  $k-1=1$ here, Lemma~\ref{lem:cover-k-1} yields the inequality. On
  the other hand, if some element of $[5]$ does not appear in any
  element of $\mc{A}_{S/I}$, then Theorem~\ref{thm:min-counterex} implies
  the desired inequality. For $k=4$, the result follows from
  Corollary~\ref{cor:n-1}.


  To address the remaining case where $k=3$, we will consider
  $\mc{A}_{S/I}$ to be a hypergraph $\mc{H} = ([5],\mc{E})$. We first
  note that $\mc{H}$ must have at least four hyperedges. Three
  hyperedges are necessary to satisfy conditions (2) and (3) of
  Theorem~\ref{thm:min-counterex}. The degree sequence of a three-edge
  $3$-uniform hypergraph consistent with
  Theorem~\ref{thm:min-counterex} must be $(2,2,2,2,1)$. Up to
  isomorphism, there is a unique $3$-uniform hypergraph with this
  degree sequence; its edge set is $\set{123,124,345}$, which does not
  satisfy the combinatorial criterion. Therefore, $\mc{H}$ has at
  least four hyperedges. 

  We now show that there is a set $\mc{S}\subseteq \mc{E}$ of four
  hyperedges giving a subhypergraph $\mc{H}'=([5],\mc{S})$ with degree
  sequence $(3,3,3,2,1)$. To do so, we consider the possible degree
  sequences for a four-edge $3$-uniform subhypergraph $\mc{H}'$ of
  $\mc{H}$.
  \begin{enumerate}
  \item If $(4,2,2,2,2)$ is the degree sequence of $\mc{H}'$, then one
    vertex $v$ is in all four hyperedges in $\mc{S}$. We know by
    Theorem~\ref{thm:min-counterex} that there is an edge $X\in \mc{E}$
    such that $v\nin X$. Deleting any hyperedge of $\mc{H}'$ yields a
    subhypergraph with degree sequence $(3,2,2,1,1)$. Adding $X$ to
    this subhypergraph then yields a subhypergraph that must have
    degree sequence $(3,3,3,2,1)$ or $(3,3,2,2,2)$. In the former
    case, we're done. The latter case will be addressed last.
  \item If $(3,3,3,3,0)$ is the degree sequence of $\mc{H}'$, then
    some vertex $v$ is not in any hyperedge in $\mc{S}$. However, by
    Theorem~\ref{thm:min-counterex}, some hyperedge $Y\in\mc{E}$ contains
    $v$. Replacing any hyperedge of $\mc{H}'$ with $Y$ yields
    progress, as the new subhypergraph must have degree sequence
    $(3,3,3,2,1)$, in which case we're done, or $(4,3,2,2,1)$, and we
    will consider this case shortly.
  \item The sequence $(4,3,3,1,1)$ is not the degree sequence of a
    four-edge hypergraph, since removing
    the vertex of degree four would leave a graph with degree sequence
    $(3,3,1,1)$, but there is no graph with this degree
    sequence. Therefore, no such hypergraph can exist.
  \item If $(4,3,2,2,1)$ is the degree sequence of $\mc{H}'$, then a
    vertex $v$ is in all four edges of $\mc{H}'$. Removing $v$ leaves
    a graph with degree sequence $(3,2,2,1)$. The only graph with this
    degree sequence is a triangle with a pendant edge attached to one
    of its vertices. Thus, $\mc{H}'$ must contain a hyperedge
    consisting of the vertex of degree $4$, the vertex of degree $3$,
    and a vertex of degree $2$. Replacing this hyperedge with a
    hyperedge $X\in \mc{E}$ such that $v\nin X$, which must exist
    since $v$ has degree $4$ in $\mc{H}'$ but cannot be in all
    hyperedges of $\mc{H}$, yields a new subhypergraph that has degree
    sequence $(3,3,3,2,1)$ or $(3,3,2,2,2)$.
  \item It remains only to consider when $\mc{H}'$ has degree sequence
    $(3,3,2,2,2)$. In this case, there are two possibilities. The
    easier to resolve occurs when the three vertices of degree $2$ all
    appear together in a hyperedge of $\mc{H}'$ and the other three
    hyperedges contain the two vertices of degree $3$ and a vertex of
    degree $2$. Let $v$ be a vertex of degree $2$ in $\mc{H}'$. Since
    the two hyperedges of $\mc{H}'$ containing $v$ intersect only in
    $v$, we know that $\mc{E}$ has another hyperedge $X=\set{v,u,w}$
    not in $\mc{H}'$ because the strong combinatorial criterion must
    be satisfied (particularly, the combinatorial criterion for $U[v]$). Notice that without loss of generality, $u$ has
    degree $3$ in $\mc{H}'$ and $w$ has degree $2$ in
    $\mc{H}'$. Letting $x$ be the other vertex with degree $2$ in
    $\mc{H}'$ and $y$ the other vertex of degree $3$ in $\mc{H}'$, we
    replace the hyperedge $\set{u,x,y}$ by $X$ to form a new
    subhypergraph of $\mc{H}$ with degree sequence $(3,3,3,2,1)$ as
    desired.

    The other possible way for $\mc{H}'$ to have a degree sequence of
    $(3,3,2,2,2)$ is depicted in Figure~\ref{fig:33222-hypergraph}. We
    notice $\mc{H}$ must have more hyperedges than $\mc{H}'$, as the strong
    combinatorial criterion is violated by the up set of $x$ if these
    are all the hyperedges of $\mc{H}$.
    \begin{figure}[h]
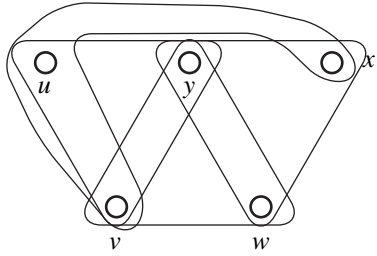

      \centering
      \begin{overpic}{33222-hypergraph}
        \put(135,62){$x$}
        \put(67,52){$y$}
        \put(12,52){$u$}
        \put(39,-7){$v$}
        \put(93,-7){$w$}
      \end{overpic}
      \caption{The second subhypergraph with degree sequence $(3,3,2,2,2)$}
      \label{fig:33222-hypergraph}
    \end{figure}
    There are four possibilities for the hyperedge of $\mc{H}$ that does
    not appear in $\mc{H}'$ but must exist to ensure the strong
    combinatorial criterion is satisfied. They are $\set{x,v,w}$,
    $\set{x,y,v}$, $\set{x,y,u}$, and $\set{x,w,u}$. In the first
    three cases, it is possible to add the hyperedge to $\mc{H}'$ and
    remove another edge to obtain a new subhypergraph with degree
    sequence $(3,3,3,2,1)$. If $\set{x,v,w}$ is added, then
    $\set{x,u,v}$ should be removed. For both $\set{x,y,v}$ and
    $\set{x,y,u}$, the hyperedge to remove is $\set{y,v,w}$. The case
    where $\set{x,w,u}$ is the hyperedge available to add requires a
    bit more work. Adding $\set{x,w,u}$ to $\mc{H}'$ yields the
    (unique) $3$-uniform, $3$-regular hypergraph on $5$
    vertices. However, this hypergraph cannot be $\mc{H}$, as the
    combinatorial criterion would be violated if it
    were, since the $f$-vector would be $(1,5,10,5)$. Without loss of
    generality, we may assume that $\mc{H}$ also contains the
    hyperedge $\set{x,v,w}$, so we remove $\set{x,u,v}$ and add
    $\set{x,v,w}$ to obtain a subhypergraph of $\mc{H}$ with degree
    sequence $(3,3,3,2,1)$ as desired.
  \end{enumerate}

  We now proceed to consider what the four hyperedges in $\mc{S}$, the
  set of hyperedges of $\mc{H}'$, can
  be. Our claim is that without loss of generality, $\mc{S} =
  \set{123,124,145,234}$. We begin by assuming that $3$ is the vertex
  contained in precisely two hyperedges $X,Y\in \mc{S}$. If $X\cap Y =
  \set{3}$, notice that the only way for the hyperedges in $\mc{S}$ to
  form a subhypergraph with degree sequence $(3,3,3,2,1)$ would be to
  have the three vertices of degree three appear together in a hyperedge
  and to use two copies of that hyperedge. Since this is impossible,
  $X\cap Y = \set{2,3}$ without loss of generality. In fact, we may
  assume $\set{123,234}\subset \mc{S}$. Notice now that $5$ can only
  appear in one element of $\mc{S}$, since if it appeared in three
  (the only other option), then each element of $\set{1,2,4}$ would
  appear in more than one hyperedge, contradicting the existence of a
  degree $1$ vertex in $\mc{H}'$. Furthermore, notice that $2$ and $5$
  cannot appear in the same hyperedge, as if they did, then vertices
  $2$, $3$, and $5$ would all have the required degree, preventing the
  addition of a fourth hyperedge. Therefore, $145\in\mc{S}$. At this
  point, vertices $3$ and $5$ have the desired degrees, but vertices
  $1$, $2$, and $4$ all have degree $2$. Therefore, $124\in \mc{S}$.

  If $\mc{A}_{S/I}=\mc{S}$, then the only $2$-sets of $\mbf{2}^5$ left
  uncovered by $P_{S/I}$ are $25$ and $35$. In this case, we may use
  the intervals $[25,1245]$ and $[35,1345]$ in the Stanley partition
  of $P_I$ (and use trivial intervals for the remainder). If
  $\mc{S}\subsetneq \mc{A}_{S/I}$, then these two intervals may be
  divided as needed to obtain a Stanley partition of $P_I$ witnessing
  $\sdepth I\geq 4$. 
\end{proof}

\section{Computational work for larger values of $n$}
\label{sec:compute}
Through the results of Section~\ref{sec:lemmas}, \ref{sec:comb-crit},
and \ref{sec:splits}, we have been able to
reduce the work of proving that $\sdepth(I) > \sdepth(S/I)$ for
$I\subset S$ a squarefree monomial ideal of $S=K[x_1,\dots,x_n]$ to a
size readily manageable by computer when $n=6$ and $n=7$. Taking, as
usual, $k$ to be the uniform size of the maximum elements of
$P_{S/I}$, the $k=0$ and $k=n$ cases are trivial. As with $n=5$, the
$k=1$ and $k=2$ cases are immediate consequences of
Theorem~\ref{thm:min-counterex} and Lemma~\ref{lem:cover-k-1}. The $k=n-1$ case
follows from Corollary~\ref{cor:n-1}.

We relied upon computer search for the $k=3$ and $k=4$ cases with
$n=6$ and $n=7$ and the $k=5$ case with $n=7$. We used McKay's
\texttt{nauty} package \cite{mckay:nauty} to generate the necessary
nonisomorphic hypergraphs on $6$ and $7$ vertices.

We developed a collection of Python routines to test the hypergraphs
for the various properties discussed in Sections~\ref{sec:lemmas}
through \ref{sec:splits}. Because of the repercussions for the degrees
of vertices in the hypergraph, a hypergraph failing to satisfy
properties (2) and (3) of Theorem~\ref{thm:min-counterex} was labelled
as having ``bad degree''. Hypergraphs for which the corresponding down
set failed the strong combinatorial criterion were not considered
further. In light of Lemma~\ref{lem:splits}, only antichains that did
not split were actually tested to find the Stanley depth of $P_I$. In
every case where this examination was conducted (by a nearly brute
force search of possible partitions), we confirmed that
$\sdepth(P_I)\geq k+1$. The counts of hypergraphs falling into each
category are shown in Tables~\ref{tab:n-6} and
\ref{tab:n-7}. Computations were done via distributed computing at
Washington and Lee University and the University of Louisville. All
source code as well as the partitions of hypergraphs into the
categories below are available for download from \url{https://github.com/mitchkeller/stanley-depth}

\begin{table}[h]
  \centering
\begin{tabular}{c|c||c|c|c|c}
  $k$& Total & Bad Degree& Fail SCC& Splits & $\sdepth(P_I)\geq k+1$\\\hline
  $3$ & 2136 & 57 & 527 & 1496 & 56\\
  $4$ & 156 & 35 & 55 & 66 & 0
\end{tabular}

\caption{Computational results for $n=6$}
\label{tab:n-6}
\end{table}

\begin{table}[h]
  \centering
\begin{tabular}{c|c||c|c|c|c}
  $k$& Total & Bad Degree& Fail SCC& Splits & $\sdepth(P_I)\geq k+1$\\\hline
  $3$ & 7013319& 2257& 888308& 5987476& 135278\\
  $4$ &7013319 & 2257& 4439735& 2383294& 188033 \\
  $5$ & 1043& 156& 589& 298& 0
\end{tabular}

\caption{Computational results for $n=7$}
\label{tab:n-7}
\end{table}

As a consequence of these computations and earlier results in this
paper, we have the following result
\begin{thm}
  \label{thm:n-leq-7} If $I$ is a squarefree monomial ideal of
  $S=K[x_1,\dots,x_n]$ with $n\leq 7$, then $\sdepth I > \sdepth S/I$.
\end{thm}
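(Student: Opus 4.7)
The plan is to combine the previously established theoretical reductions with the computer-assisted verification summarized in Tables~\ref{tab:n-6} and \ref{tab:n-7}. For $n\leq 3$ the result is Theorem~\ref{thm:n-3}, and for $n=4,5$ it follows from Theorems~\ref{thm:n4-sqfree} and \ref{thm:n5-sqfree} respectively, so only the cases $n=6$ and $n=7$ require further argument.

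For each of $n=6$ and $n=7$, I would begin by invoking Theorem~\ref{thm:min-counterex} to reduce to a minimal counterexample whose Stanley--Reisner complex is pure of some dimension $k-1$, with $k=\sdepth S/I$, and whose facets cover $[n]$ without having a common vertex. The boundary cases $k\in\{0,n\}$ are immediate; the cases $k=1,2$ follow from Theorem~\ref{thm:min-counterex} together with Lemma~\ref{lem:cover-k-1} (since $k-1\leq 1$ forces all $(k-1)$-sets into $P_{S/I}$, and $k\leq (n-1)/2$); and the case $k=n-1$ is dispatched by Corollary~\ref{cor:n-1}. This leaves the middle ranges $k\in\{3,4\}$ for $n=6$ and $k\in\{3,4,5\}$ for $n=7$.

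For each such remaining $(n,k)$, I would translate the problem into enumerating all $k$-uniform hypergraphs on $n$ vertices (up to the natural $S_n$-action), which are the possible antichains $\mc{A}_{S/I}$ of facets. Using McKay's \texttt{nauty} to produce a list of nonisomorphic hypergraphs, I would then filter the list by three successive tests: (a) discard hypergraphs whose degree sequence is incompatible with Theorem~\ref{thm:min-counterex} (labelled \emph{bad degree}); (b) discard hypergraphs whose down set violates the strong combinatorial criterion of Section~\ref{sec:comb-crit}, since these already satisfy $\sdepth S/I<k$ and hence the desired inequality trivially; and (c) discard antichains that split in the sense of Definition~\ref{defn:splits}, appealing to Lemma~\ref{lem:splits}. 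For every hypergraph surviving all three filters, I would perform a direct search over interval partitions of $P_I$ to verify $\sdepth P_I \geq k+1$. The Tables~\ref{tab:n-6} and \ref{tab:n-7} record that in every such remaining case this inequality is indeed witnessed.

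The main obstacle is the brute-force partition search in step~(c) for $n=7$, since the raw count of nonisomorphic $k$-uniform hypergraphs on seven vertices runs into the millions for $k=3,4$; without the upstream filters provided by Theorem~\ref{thm:min-counterex}, the strong combinatorial criterion, and Lemma~\ref{lem:splits}, this search would be computationally intractable. The filters reduce the number of hypergraphs requiring an explicit partition search to roughly $10^5$, which is manageable via distributed computation. Once every surviving hypergraph has been confirmed to satisfy $\sdepth P_I \geq k+1$, the theorem follows for all $n\leq 7$.
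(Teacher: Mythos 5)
Your proposal follows essentially the same route as the paper: the reductions from Theorem~\ref{thm:min-counterex}, Lemma~\ref{lem:cover-k-1}, and Corollary~\ref{cor:n-1} dispose of $k\in\{0,1,2,n-1,n\}$, and the remaining pairs ($k=3,4$ for $n=6$ and $k=3,4,5$ for $n=7$) are handled by the identical computational pipeline---\texttt{nauty} enumeration, the bad-degree filter, the strong combinatorial criterion, Lemma~\ref{lem:splits}, and a brute-force partition search confirming $\sdepth(P_I)\geq k+1$---exactly as recorded in Tables~\ref{tab:n-6} and \ref{tab:n-7}. One small imprecision: a violation of the strong combinatorial criterion does not ``trivially'' yield $\sdepth I>\sdepth S/I$; rather, it forces $\sdepth S/I<k$, which rules the hypergraph out as the facet antichain of a minimal counterexample because property~(1) of Theorem~\ref{thm:min-counterex} requires $\sdepth S/I=k$ there---this is the correct justification for discarding those cases, and with it your argument is sound.
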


We also used Python code to explicitly calculate the Stanley depth
of $I$ and of $S/I$ to look for trends in the difference between these
values for different values of $k$. No clear trends emerged, but the
results of the computational work is reflected in Table~\ref{tab:gap}
and Figure~\ref{fig:gap}. It is worth noting that, given the counts for
bad degree from Table~\ref{tab:n-7}, there are situations
where there is not a natural reduction of the problem to a smaller
value of $n$ but the gap between $\sdepth(I)$ and $\sdepth(S/I)$ is
still greater than $1$. Furthermore, the instances where the gap
between the Stanley depth of $S/I$ and $I$ is precisely one include
instances that satisfy the strong combinatorial criterion as well as
instances that violate it.

\begin{table}[h]
  \centering
  \begin{subfigure}{0.4\textwidth}
    \centering
  \begin{tabular}{r|cc}
    &$4$&$5$\\\hline
    $1$ & $13$ &\\
    $2$ & $1026$ & $4$
  \end{tabular}
\caption{$k=2$}
\end{subfigure}
  \begin{subfigure}{0.4\textwidth}
    \centering
  \begin{tabular}{r|cc}
    &$4$&$5$\\\hline
    $2$ & $886423$ & $2424$\\
    $3$ & $4878319$ & $1246153$
  \end{tabular}
\caption{$k=3$}
\end{subfigure}

  \begin{subfigure}{0.40\textwidth}
    \centering
  \begin{tabular}{r|ccc}
    &$4$&$5$ & $6$\\\hline
    $3$ & $279$ & $4440053$ & \\
    $4$ &  & $2572970$ & $17$
  \end{tabular}
\caption{$k=4$}
\end{subfigure}
  \begin{subfigure}{0.4\textwidth}
    \centering
  \begin{tabular}{r|cc}
    &$5$&$6$ \\\hline
    $4$ & $282$ & $369$\\
    $5$ &  & $392$
  \end{tabular}
\caption{$k=5$}
\end{subfigure}

  \caption{Number of pure simplicial complexes $\Delta$ of dimension $k-1$ (for
    $S=K[x_1,\dots,x_7]$)
    with $\sdepth(S/I_\Delta)$ as given by row headings and $\sdepth(I_\Delta)$
    as given by column headings.}
  \label{tab:gap}
\end{table}

\begin{figure}[b]
  \centering
  \includegraphics[width=0.75\linewidth]{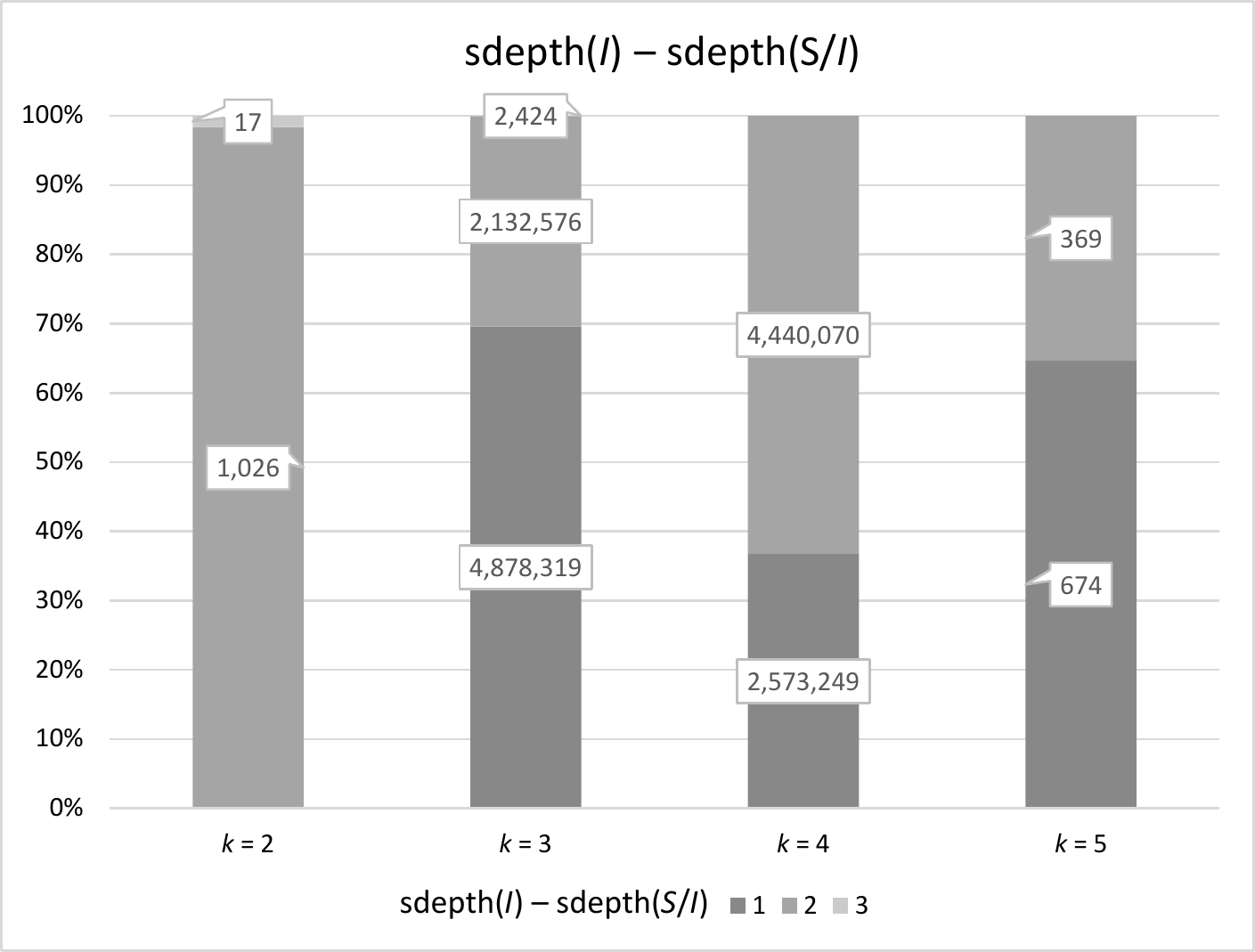}
  \caption{Quantifying the gap between $\sdepth(S/I)$ and
    $\sdepth(I)$}
  \label{fig:gap}
\end{figure}

Attempts to extend the computational work to increase the range of $n$
for which Theorem~\ref{thm:n-leq-7} is valid appears infeasible at
this time. Resolving the case with $n=8$ and $k=3$ (which would allow
for concurrently resolving $k=5$) would require generating over $894$
billion hypergraphs, based on the enumeration of $k$-uniform
hypergraphs on $n$ vertices done by Qian 
\cite{qian:hypergraph-enum}. (This is in contrast to approximately
$3.5$ million hypergraphs generated for the entire $n=7$ case.)

In addition to the computations necessary to prove Theorem~\ref{thm:n-leq-7},
we also investigated the question raised by Duval et al.\ in
\cite{duval:sdepth-counterex} as to whether or not their
counterexample to the partitionability conjecture was the smallest
possible. We used our Python code to compute the Stanley depth, and
whenever that result was less than the size of the sets in the
antichain of maximal elements of $P_{S/I}$, we used SageMath
\cite{sagemath} to determine if $P_{S/I}$ (viewed as a simplicial
complex) was Cohen-Macaulay. As a consequence, we have Theorem~\ref{thm:partition-7}.

\begin{thm}\label{thm:partition-7}
  Let $K$ be a field, $S = K[x_1,\dots,x_n]$, and $I\subseteq S$ a
  squarefree monomial ideal. Let $\Delta_I$ be the \SR{}
  complex of $I$. If $n\leq 7$ and $\Delta_I$ is Cohen-Macaulay, then
  $\sdepth S/I = \dim \Delta_I+1$. Equivalently, $\Delta_I$ is
  partitionable. 
\end{thm}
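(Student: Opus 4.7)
The plan is to piggyback on the exhaustive enumeration that was already carried out to prove \thmref{thm:n-leq-7}, supplement it with a Cohen-Macaulayness test, and then invoke a known algebraic equivalence to deduce partitionability. The first observation is that any Cohen-Macaulay simplicial complex $\Delta_I$ is automatically pure, so all facets have the same size $d=\dim\Delta_I+1$, and $\depth S/I=d$ by definition of Cohen-Macaulay. Because the Krull dimension of $S/I_{\Delta}$ is $\dim\Delta+1$, the general inequality $\sdepth S/I\le \dim\Delta_I+1$ also holds. Consequently, to establish $\sdepth S/I=\dim\Delta_I+1$, it suffices to rule out $\sdepth S/I<d$.

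The reduction above converts the theorem into a finite search. In the proof of \thmref{thm:n-leq-7}, every pure $k$-uniform simplicial complex on $n\le 7$ vertices was either eliminated by one of the reductions of Sections~\ref{sec:lemmas}--\ref{sec:splits} (bad degree, failed strong combinatorial criterion, or splitting) or had $\sdepth(P_I)$ and $\sdepth(P_{S/I})$ computed by brute force. The complexes with $\sdepth S/I<k$ are therefore already explicitly enumerated (their counts are visible in \tableref{tab:gap} as the entries below the diagonal). I would take this finite list and, using SageMath \cite{sagemath}, verify via Reisner's criterion (the reduced homology of every link vanishes below the top dimension) that none of these complexes is Cohen-Macaulay. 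This establishes the first conclusion $\sdepth S/I=\dim\Delta_I+1$ for all Cohen-Macaulay $\Delta_I$ with $n\le 7$.

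The equivalence between this equality and partitionability is the content of the theorem of Herzog, Jahan, and Yassemi cited in the introduction \cite{herzog:partition-complex}: for a Cohen-Macaulay simplicial complex $\Delta$, Stanley's conjecture $\sdepth S/I_\Delta\ge \depth S/I_\Delta$ holds if and only if $\Delta$ is partitionable. Because $\depth S/I_\Delta=\dim\Delta+1$ in the Cohen-Macaulay case and the reverse inequality $\sdepth\le \dim\Delta+1$ is automatic, the two conditions $\sdepth S/I=\dim\Delta_I+1$ and partitionability of $\Delta_I$ coincide, giving the second half of the statement for free.

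The principal obstacle is purely logistical rather than conceptual: the size of the case analysis for $n=7$ is very large, and the Cohen-Macaulayness test requires computing the reduced (co)homology of the link of every face, which is comparatively expensive. However, the test only needs to be run on the relatively small fraction of complexes where the Stanley depth came out sub-optimal (those recorded below the diagonal in \tableref{tab:gap}), so the computation remains tractable. Engineering a reliable distributed implementation and cross-checking the output is the main burden; there is no delicate algebraic point still to be resolved once the first two paragraphs are in place.
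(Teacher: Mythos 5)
Your proposal is precisely the paper's approach: piggyback on the existing computer enumeration of pure complexes on up to seven vertices, isolate those with $\sdepth S/I$ strictly below the common facet size (the cases recorded off-diagonally in Table~\ref{tab:gap}), verify with SageMath that none of them is Cohen-Macaulay, and invoke the Herzog--Jahan--Yassemi equivalence \cite{herzog:partition-complex} for the partitionability restatement. The paper presents this as an immediate consequence of the described computations, while your write-up spells out the intermediate logical steps (Cohen--Macaulayness implies purity and $\depth S/I = \dim\Delta_I + 1$, the Krull-dimension upper bound forces $\sdepth S/I \leq \dim\Delta_I + 1$, so only the suboptimal complexes need the homological test), but the route is the same.
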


As an extension of Theorem~\ref{thm:partition-7}, we have the
following:

\begin{cor}\label{cor:depth-sdepth-quotient}
    Let $K$ be a field, $S = K[x_1,\dots,x_n]$, and $I\subseteq S$ a
  squarefree monomial ideal. If $n\leq 7$, then $\depth S/I\leq
  \sdepth S/I$.
\end{cor}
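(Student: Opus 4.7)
The plan is to reduce the desired inequality $\depth S/I\leq\sdepth S/I$ to the Cohen-Macaulay case, which is exactly the content of Theorem~\ref{thm:partition-7}, via the standard skeleton reduction.

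First I would set $\Delta=\Delta_I$ and $d=\depth S/I$, and form the $(d-1)$-skeleton $\Delta^{(d-1)}=\set{F\in\Delta\colon |F|\leq d}$. A classical consequence of Reisner's criterion (see e.g.\ \cite{stanley:comb-comm-alg}) is that for every $j\geq 0$ one has $\depth S/I_{\Delta^{(j)}}=\min(j+1,\depth S/I_\Delta)$. Specializing to $j=d-1$ gives $\depth S/I_{\Delta^{(d-1)}}=d=\dim\Delta^{(d-1)}+1$, so $\Delta^{(d-1)}$ is Cohen-Macaulay of dimension $d-1$.

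Next, since $n\leq 7$, I would invoke Theorem~\ref{thm:partition-7} on $\Delta^{(d-1)}$ to conclude that it is partitionable, say $\Delta^{(d-1)}=\bigsqcup_{i=1}^r[F_i,G_i]$, where each $G_i$ is a facet of $\Delta^{(d-1)}$ and hence $|G_i|=d$. I would then extend this to a Stanley partition of $P_{S/I}=\Delta$ by adjoining the trivial interval $[H,H]$ for every face $H\in\Delta$ with $|H|>d$. The resulting partition has every interval's upper bound of size at least $d$, so $\sdepth S/I\geq d=\depth S/I$.

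The main obstacle is the skeleton Cohen-Macaulayness invoked in the first step; this is the only external ingredient, but it is a standard consequence of Reisner's criterion that reduces Stanley's conjecture for squarefree monomial quotients to the Cohen-Macaulay case. Once that is in hand, Theorem~\ref{thm:partition-7} furnishes the partition of the skeleton and the extension to $\Delta$ by trivial intervals on higher-dimensional faces is a routine disjoint-union argument.
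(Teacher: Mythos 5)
Your proposal is correct and follows essentially the same route as the paper: the paper deduces the corollary from Theorem~\ref{thm:partition-7} via Corollary~37 of Herzog's survey, which is exactly the skeleton reduction to the Cohen--Macaulay case that you carry out by hand (the depth formula $\depth S/I_{\Delta^{(j)}}=\min(j+1,\depth S/I_\Delta)$ plus extending the skeleton partition by trivial intervals). The only difference is that you unpack the cited reduction explicitly rather than quoting it, which is fine.
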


This generalizes of a result of Popescu in
\cite{popescu:ineq-depth-sdepth}, where it was proved for $n\leq
5$. Corollary~\ref{cor:depth-sdepth-quotient} follows from
Theorem~\ref{thm:partition-7} by Corollary~37 of
\cite{herzog:sdepth-survey}, which reduces to the Cohen-Macaulay case
by taking skeleta in a way that does not increase the number of
vertices.

\subsection*{Acknowledgements} This work was conducted in part using
the resources of the University of Louisville's research computing
group and the Cardinal Research Cluster. The authors are grateful to
the anonymous referee whose suggestions added additional connections
to the existing literature.

\bibliographystyle{acm}
\bibliography{zotero}

\begin{thebibliography}{10}

\bibitem{k:interval-partition-sdepth}
{\sc Bir{\'o}, C., Howard, D.~M., Keller, M.~T., Trotter, W.~T., and Young,
  S.~J.}
\newblock Interval partitions and {{Stanley}} depth.
\newblock {\em J. Combin. Theory Ser. A. 117}, 4 (May 2010), 475--482.

\bibitem{duval:sdepth-counterex}
{\sc Duval, A.~M., Goeckner, B., Klivans, C.~J., and Martin, J.~L.}
\newblock A non-partitionable {{Cohen}}\textendash{}{{Macaulay}} simplicial
  complex.
\newblock {\em Advances in Mathematics 299\/} (Aug. 2016), 381--395.

\bibitem{garsia:comb-methods}
{\sc Garsia, A.~M.}
\newblock Combinatorial methods in the theory of {{Cohen}}-{{Macaulay}} rings.
\newblock {\em Advances in Mathematics 38}, 3 (1980), 229--266.

\bibitem{ge:sdepth-sqfree-veronese}
{\sc Ge, M., Lin, J., and Shen, Y.-H.}
\newblock On a conjecture of {{Stanley}} depth of squarefree {{Veronese}}
  ideals.
\newblock {\em Comm. Algebra 40}, 8 (2012), 2720--2731.

\bibitem{herzog:sdepth-survey}
{\sc Herzog, J.}
\newblock A survey on {{Stanley}} depth.
\newblock In {\em Monomial Ideals, Computations and Applications}, vol.~2083 of
  {\em Lecture Notes in Math.} {Springer, Heidelberg}, 2013, pp.~3--45.

\bibitem{herzog:partition-complex}
{\sc Herzog, J., Jahan, A.~S., and Yassemi, S.}
\newblock Stanley decompositions and partitionable simplicial complexes.
\newblock {\em Journal of Algebraic Combinatorics. An International Journal
  27}, 1 (2008), 113--125.

\bibitem{herzog:sdepth-mon-ideal}
{\sc Herzog, J., Vladoiu, M., and Zheng, X.}
\newblock How to compute the {{Stanley}} depth of a monomial ideal.
\newblock {\em J. Algebra 322}, 9 (2009), 3151--3169.

\bibitem{ichim:compute-sdepth}
{\sc Ichim, B., Katth{\"a}n, L., and Jos{\'e} Moyano-Fern{\'a}ndez, J.}
\newblock How to compute the {{Stanley}} depth of a module.
\newblock {\em Mathematics of Computation 86}, 303 (2017), 455--472.

\bibitem{ichim:sdepth-polarization}
{\sc Ichim, B., Katth{\"a}n, L., and Moyano-Fern{\'a}ndez, J.~J.}
\newblock The behavior of {{Stanley}} depth under polarization.
\newblock {\em Journal of Combinatorial Theory, Series A 135\/} (Oct. 2015),
  332--347.

\bibitem{ichim:hdepth-algorithm}
{\sc Ichim, B., and Zarojanu, A.}
\newblock An algorithm for computing the multigraded {{Hilbert}} depth of a
  module.
\newblock {\em Experimental Mathematics 23}, 3 (2014), 322--331.

\bibitem{katthan:betti-posets}
{\sc Katth{\"a}n, L.}
\newblock Betti {{Posets}} and the {{Stanley Depth}}.
\newblock {\em Arnold Mathematical Journal 2}, 2 (Feb. 2016), 267--276.

\bibitem{k:sdepth-sqfree-veronese}
{\sc Keller, M.~T., Shen, Y.-H., Streib, N., and Young, S.~J.}
\newblock On the {{Stanley}} depth of squarefree {{Veronese}} ideals.
\newblock {\em J. Alg. Combin. 33}, 2 (2011), 313--324.

\bibitem{k:sdepth-squarefree}
{\sc Keller, M.~T., and Young, S.~J.}
\newblock Stanley depth of squarefree monomial ideals.
\newblock {\em J. Algebra 322}, 10 (2009), 3789--3792.

\bibitem{mckay:nauty}
{\sc McKay, B.~D., and Piperno, A.}
\newblock Practical graph isomorphism, {{II}}.
\newblock {\em Journal of Symbolic Computation 60\/} (2014), 94--112.

\bibitem{okazaki:sdepth}
{\sc Okazaki, R.}
\newblock A lower bound of {{Stanley}} depth of monomial ideals.
\newblock {\em Journal of Commutative Algebra 3}, 1 (2011), 83--88.

\bibitem{popescu:ineq-depth-sdepth}
{\sc Popescu, D.}
\newblock An inequality between depth and {{Stanley}} depth.
\newblock {\em Bulletin Math{\'e}matique de la Soci{\'e}t{\'e} des Sciences
  Math{\'e}matiques de Roumanie. Nouvelle S{\'e}rie 52(100)}, 3 (2009),
  377--382.

\bibitem{popescu:compute-sdepth}
{\sc Popescu, D., and Qureshi, M.~I.}
\newblock Computing the {{Stanley}} depth.
\newblock {\em Journal of Algebra 323}, 10 (2010), 2943--2959.

\bibitem{qian:hypergraph-enum}
{\sc Qian, J.}
\newblock Enumeration of unlabeled uniform hypergraphs.
\newblock {\em Discrete Mathematics 326\/} (2014), 66--74.

\bibitem{rauf:depth-sdepth}
{\sc Rauf, A.}
\newblock Depth and {{Stanley}} depth of multigraded modules.
\newblock {\em Communications in Algebra 38}, 2 (2010), 773--784.

\bibitem{sagemath}
{\sc \relax The Sage~Developers}.
\newblock {{SageMath}}, the {{Sage Mathematics Software System}} ({{Version}}
  7.2), 2016.

\bibitem{rinaldo:sdepth-algorithm}
{\sc Rinaldo, G.}
\newblock An algorithm to compute the {{Stanley}} depth of monomial ideals.
\newblock {\em Le Matematiche 63}, 2 (2008), 243--256 (2009).

\bibitem{shen:sdepth-monomial-ideal}
{\sc Shen, Y.~H.}
\newblock Stanley depth of complete intersection monomial ideals and
  upper-discrete partitions.
\newblock {\em J. Algebra 321}, 4 (2009), 1285--1292.

\bibitem{stanley:balanced-cm}
{\sc Stanley, R.~P.}
\newblock Balanced {{Cohen}}-{{Macaulay}} complexes.
\newblock {\em Transactions of the American Mathematical Society 249}, 1
  (1979), 139--157.

\bibitem{stanley:linear-dioph}
{\sc Stanley, R.~P.}
\newblock Linear {{Diophantine}} equations and local cohomology.
\newblock {\em Inventiones Mathematicae 68}, 2 (1982), 175--193.

\bibitem{stanley:comb-comm-alg}
{\sc Stanley, R.~P.}
\newblock {\em Combinatorics and Commutative Algebra}, second~ed., vol.~41 of
  {\em Progress in Mathematics}.
\newblock {Birkh{\"a}user Boston, Inc., Boston, MA}, 1996.

\end{thebibliography}

\end{document}